\documentclass[a4paper,fleqn]{cas-sc}

\usepackage[numbers]{natbib}

\usepackage{amsmath,amssymb,amsthm,mathtools}
\usepackage{booktabs}
\usepackage{enumitem}
\usepackage{hyperref}
\usepackage{xcolor}
\usepackage{microtype}
\usepackage{tikz}
\usepackage{subcaption}
\usepackage{caption}
\usetikzlibrary{positioning, arrows.meta, decorations.pathreplacing}

\usepackage{placeins}   

\ExplSyntaxOn
\cs_set:Npn \__first_footerline: {}
\ExplSyntaxOff

\ExplSyntaxOn
\bool_gset_true:N \g_stm_nologo_bool
\ExplSyntaxOff

\newtheorem{theorem}{Theorem}[section]
\newtheorem{proposition}[theorem]{Proposition}
\newtheorem{lemma}[theorem]{Lemma}
\newtheorem{corollary}[theorem]{Corollary}
\theoremstyle{definition}

\newtheorem{remark}{Remark}[section]
\newtheorem{example}[theorem]{Example}

\newcommand{\D}{\mathcal D}
\newcommand{\F}{\mathcal F}
\newcommand{\Des}{\operatorname{Des}}
\newcommand{\LE}{\operatorname{LE}}
\newcommand{\st}{\operatorname{st}}

\hypersetup{
	colorlinks=true,
	linkcolor=red,
	citecolor=blue,
	urlcolor=green
}

\def\tsc#1{\csdef{#1}{\textsc{\lowercase{#1}}\xspace}}
\tsc{WGM}
\tsc{QE}
\tsc{EP}
\tsc{PMS}
\tsc{BEC}
\tsc{DE}

\begin{document}
	\let\WriteBookmarks\relax
	\def\floatpagepagefraction{1}
	\def\textpagefraction{.001}
	
	\shorttitle{Pattern avoidance in generalized alternating permutations}
	
	\shortauthors{Luo et al.}
	
	\title[mode = title]{Classical and vincular patterns of length three in generalized alternating permutations}
	
	\tnotemark[1]
	
	\tnotetext[1]{This work was supported by the National Science Foundation of China (grant 12201641).}
	
	\author[1]{Zhenhua Luo}[
	auid=000,
	bioid=1,
	orcid=0009-0007-5106-7490
	]
	\fnmark[1]   
	\ead{luozhenhua2004@126.com}
	\credit{Methodology, Investigation}
	
	\author[2]{Junting Wang}
	\fnmark[2]
	\ead{2022011749@student.cup.edu.cn}
	\credit{Investigation, Validation}
	
	\author[3]{Ziyi Yang}[
	auid=003,
	bioid=3,
	orcid=0009-0007-2686-7509
	]
	
	\fnmark[3]
	\ead{yangziyi.2000@outlook.com}
	\credit{Writing – review \& editing}
	
	\author[3]{Feng Zhao}[
	auid=004,
	bioid=4,	
	]
	\cormark[1]   
	\ead{zhaofeng@hebtu.edu.cn}
	\credit{Conceptualization, Supervision, Project administration}
	
	\author[2]{Tongyuan Zhao}[
	auid=005,
	bioid=5,
	orcid=0000-0003-3473-1445
	]
	\fnmark[5]
	\ead{zhaotongyuan@cup.edu.cn}
	\credit{Writing – Original Draft, Project Administration, Supervision}
	
	\cortext[1]{Corresponding author}
	
	
	\affiliation[1]{organization={School of Mathematics and Information Science},
		addressline={Guangzhou University},
		city={Guangzhou},
		country={China}}
	\affiliation[2]{organization={College of Science},
		addressline={China University of Petroleum},
		city={Beijing},
		postcode={102249},
		country={China}}
	\affiliation[3]{organization={School of Mathematical Sciences},
		addressline={Hebei Normal University},
		city={Shijiazhuang},
		postcode={050024},
		country={China}}
	
	\begin{abstract}
		Let $k\ge2$ and let $\mathcal{D}_{N,k}$ be the set of permutations of $[N]={\{1,2,\ldots,N\}}$ whose descent set is exactly $\{k,2k,\ldots,k\lfloor (N-1)/k\rfloor\}$. We enumerate the elements of $\mathcal{D}_{N,k}$ avoiding each classical and each vincular pattern of length three.
		
		For classical patterns, we give recursive bijections from the $132$- and $231$-avoiding classes to ordered forests of $s$ complete $k$-ary trees, obtaining the Raney number. The $213$- and $312$-avoiding classes are obtained from these forest bijections by completing the last block and applying reverse-complement symmetry. The remaining classical case $321$ is expressed by RSK.
		
		For vincular patterns, we enumerate the six fully consecutive patterns and the twelve patterns with exactly one adjacency. The closed results forms are accompanied by bijective models: the Catalan, Raney, Fuss-Catalan, and RSK cases are natural $k$-ary or fixed-descent extensions of classical bijections, while the product and poset cases arise from block-insertion and record/tree-poset encodings forced by the adjacency conditions.
	\end{abstract}
	
	
	\begin{keywords}
		05A05 \sep 05A15 \sep 05A19 \sep
		Pattern avoidance \sep alternating permutations \sep Raney numbers \sep Fuss–Catalan numbers \sep $k$-ary trees
	\end{keywords}
	
	\maketitle
	
	
	\section{Introduction}
	
	A permutation \(\pi=\pi_1\cdots\pi_N\) contains a pattern \(\sigma\in S_r\) if a subsequence of \(\pi\) has the same relative order as \(\sigma\); otherwise \(\pi\) avoids \(\sigma\).  Pattern avoidance first arose naturally in Knuth's study of stack sorting, where the avoidance of a single pattern of length three leads to Catalan numbers \cite{KnuthTAOCP}.  Simion and Schmidt gave the classical systematic treatment of restrictions by patterns of length three, including bijections, parity refinements, and simultaneous avoidance classes \cite{SimionSchmidt}.  The subject has since developed into a central part of enumerative combinatorics; standard references include the books of B\'ona and Kitaev \cite{BonaBook,KitaevBook}.
	
	There are two refinements of ordinary pattern avoidance that are especially relevant here.  Babson and Steingr\'imsson introduced generalized permutation patterns, now often called vincular patterns, in which selected adjacent letters of the pattern must be realized by adjacent entries in the ambient permutation \cite{BabsonSteingrimsson}.  Claesson developed their enumerative theory and terminology \cite{Claesson}.  The case in which all adjacent positions are prescribed is the theory of consecutive patterns; systematic enumeration of consecutive patterns was initiated by Elizalde and Noy and later surveyed by Elizalde \cite{ElizaldeNoy,ElizaldeSurvey}.  Vincular patterns of length three are therefore the first nontrivial meeting point between classical and consecutive avoidance.
	
	Alternating permutations form another natural restricted setting for pattern avoidance.  Stanley's survey records many enumerative properties of alternating permutations and their relation to descent sets, Euler numbers, and tableaux \cite{StanleySurvey}.  Mansour studied restricted \(132\)-alternating permutations via Chebyshev polynomials \cite{Mansour}.  In a series of works, Lewis proved that avoiding any single pattern of length three in alternating permutations gives Catalan enumerations, developed further connections with Young tableaux, RSK-type correspondences, generating trees, and lattice paths for longer patterns \cite{LewisNote,LewisJCTA,LewisEJC,LewisDMTCS}.
	
	Several of Lewis's conjectures and equivalences were later studied by B\'ona, Chen and Zhou and Xu and Yan\cite{Bona,ChenChenZhou,XuYan}.  Around the same time, to address deeper structural recurrences and equivalences, Gowravaram and Jagadeesan introduced the framework of ascent–descent (AD) Young diagrams and established the shape-Wilf equivalences \(12q\sim21q\) and \(213q\sim321q\) for alternating permutations, thereby providing a unified explanation for several Catalan-type enumerations \cite{GowravaramJagadeesan,Jagadeesan}.
	
	This paper considers a periodic-descent-set analogue of alternating permutations. This perspective is closely related to the reading words of ``thickened staircase tableaux'' appearing in the early work of Lewis \cite{LewisDMTCS}, and was later extended by Gowravaram and Jagadeesan in their studies of pattern avoidance in Young diagrams, ascent-descent Young diagrams, and permutations with restricted ascents and descents \cite{GowravaramJagadeesan,Jagadeesan}. Together, these works place alternating pattern avoidance in a broader framework in which the descent set itself, rather than alternation alone, becomes a fundamental part of the structure. In fact, from this broader perspective, the AD-Young diagram framework already suggests that such generalized descent classes may exhibit rich Wilf-equivalence phenomena.
	
	More concretely, Lewis explicitly raised the question of whether the generalized descent classes \(\mathrm{Des}_{n,k}\)---permutations whose descent set is exactly \(\{k, 2k, \ldots\}\)---provide a natural setting for pattern avoidance, and whether nontrivial Wilf equivalences exist within these classes \cite{LewisEJC}. These \(\mathrm{Des}_{n,k}\) classes are precisely the sets \(\mathcal{D}_{N,k}\) studied in this paper.
	
	In this paper, we initiate a systematic study of pattern avoidance in the generalized alternating class \(\mathcal{D}_{N,k}\). For \(k\ge2\) and \(N\ge1\), define
	\[
	\D_{N,k}=\{\pi\in S_N:\Des(\pi)=\{k,2k,\ldots,k\lfloor (N-1)/k\rfloor\}\}.
	\]
	Equivalently, entries increase inside each block of length \(k\), while there is a descent at every boundary between consecutive blocks.  The case \(k=2\) is the usual up-down alternating class.  We write throughout
	\[
	N=mk+s,\qquad 1\le s\le k,
	\]
	so the first \(m\) blocks have length \(k\) and the last block has length \(s\).  This convention also treats complete blocks uniformly: if \(N=nk\), then \(m=n-1\) and \(s=k\).
	
	Our first result is a complete enumeration of \(\D_{N,k}\) avoiding a single classical pattern of length three.  The patterns \(132\) and \(231\) are counted by the Raney number
	\[
	\frac{s}{mk+s}\binom{mk+s}{m},
	\]
	which is the number of ordered forests of \(s\) complete \(k\)-ary trees with \(m\) internal vertices.  The patterns \(213\) and \(312\) are counted by the Fuss-Catalan number
	\[
	\frac{1}{(k-1)(m+1)+1}\binom{k(m+1)}{m+1}.
	\]
	The increasing pattern \(123\) is degenerate when \(k\ge3\), while the decreasing pattern \(321\) is naturally described by the Robinson-Schensted correspondence in terms of two-row standard Young tableaux and ballot words.
	
	Our second result is a classification of all vincular patterns of length three in \(\D_{N,k}\).  We treat separately the six fully consecutive patterns and the twelve patterns with exactly one adjacency.  The answers fall into four types: degenerate cases, Raney or Fuss-Catalan numbers, explicit factorial-product formulas, and linear-extension numbers of concrete posets obtained by adding natural inequalities to the block poset.  The linear-extension cases are included as exact formulas rather than recurrences; they mark precisely the cases where local vincular avoidance imposes a global partial order but does not simplify to a product.
	
	The paper is organized so that the bijective structure comes before the algebra.  Section~2 gives notation, the tree generating functions, and the reverse-complement symmetry used in complete blocks.  Section~3 proves the classical length-three enumerations by explicit forest, tree, and RSK models.  Section~4 treats fully consecutive patterns by local boundary constraints.  Section~5 treats all one-adjacent vincular patterns by tightening, insertion, and poset arguments.  
	
	\section{Definitions and preliminary objects}
	
	For a permutation $\pi=\pi_1\cdots\pi_N$, let
	\[
	\Des(\pi)=\{i\in[N-1]:\pi_i>\pi_{i+1}\}.
	\]
	For $N=mk+s$ with $1\le s\le k$, a permutation in $\D_{N,k}$ is written in blocks
	\[
	B_1B_2\cdots B_mB_{m+1},
	\]
	where the first $m$ blocks have length $k$ and the last block has length $s$.  We write the entries in the $i$th complete block as
	\[
	x_{i,1}<x_{i,2}<\cdots <x_{i,k}\qquad (1\le i\le m),
	\]
	and the entries in the last block as
	\[
	x_{m+1,1}<\cdots <x_{m+1,s}.
	\]
	The boundary descent relations are
	\[
	x_{i,k}>x_{i+1,1}\qquad (1\le i\le m).
	\]
	Equivalently, if values are read as labels of the positions, the defining poset for $\D_{N,k}$, denoted $B_{m,s,k}$, has the relations
	\[
	x_{i,a}<x_{i,a+1}\quad\text{inside each block},\qquad
	x_{i+1,1}<x_{i,k}\quad(1\le i\le m).
	\]
	
	\begin{lemma}\label{lem:blockposet}
		The linear extensions of \(B_{m,s,k}\) are precisely the permutations in \(\D_{N,k}\).
	\end{lemma}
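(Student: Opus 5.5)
We prove the lemma by making the identification between labelings and permutations explicit, and then checking that the two defining conditions match.

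Fix $N=mk+s$ with $1\le s\le k$. Index the $N$ elements of $B_{m,s,k}$ by the positions $1,\dots,N$: the element $x_{i,a}$ corresponds to position $(i-1)k+a$. A linear extension is an order-preserving bijection $L\colon B_{m,s,k}\to[N]$, meaning $u<v$ in the poset implies $L(u)<L(v)$. Such a bijection determines the word $\pi=\pi_1\cdots\pi_N$ with $\pi_{(i-1)k+a}=L(x_{i,a})$. Conversely, every $\pi\in S_N$ determines a bijection $L$ in this way. So there is a bijection between labelings and permutations, and it remains to show that $L$ is order-preserving exactly when $\pi\in\D_{N,k}$.

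Since $B_{m,s,k}$ is defined by its covering relations, $L$ is order-preserving if and only if it respects each generating relation; transitivity then takes care of everything else. There are two kinds of generating relation.

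\textbf{Relations inside a block.} The relation $x_{i,a}<x_{i,a+1}$ translates to $\pi_j<\pi_{j+1}$ for $j=(i-1)k+a$. As $i$ and $a$ range over the allowed indices, $j$ runs over every position in $[N-1]$ that is not a multiple of $k$. (For the last block, $a$ only ranges up to $s-1$, so $j$ stops at $mk+s-1=N-1$.)

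\textbf{Boundary relations.} The relation $x_{i+1,1}<x_{i,k}$ translates to $\pi_{ik+1}<\pi_{ik}$ for $1\le i\le m$, that is, a descent at each position $ik$.

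\textbf{Matching with the descent set.} The multiples of $k$ lying in $[N-1]$ are exactly $k,2k,\dots,mk$. We need $mk\le N-1$, which holds because $s\ge1$, and we need $(m+1)k>N-1$, which holds because $s\le k$. Moreover $m=\lfloor (N-1)/k\rfloor$, since $N-1=mk+(s-1)$ with $0\le s-1<k$.

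Putting these together, $L$ is order-preserving if and only if $\pi$ has an ascent at every non-multiple of $k$ in $[N-1]$ and a descent at every multiple of $k$ in $[N-1]$. This says precisely that $\Des(\pi)=\{k,2k,\dots,k\lfloor (N-1)/k\rfloor\}$, i.e. $\pi\in\D_{N,k}$.

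The only step needing care is this index bookkeeping with the convention $1\le s\le k$. In particular, when $N=nk$ we have $s=k$ and $m=n-1$, so no relation is imposed after position $N-1$. Everything else is immediate from the definitions.
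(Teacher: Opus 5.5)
Your proof is correct and takes the same approach as the paper: identify labelings with permutations, then check that the in-block relations are exactly the required ascents and the boundary relations are exactly the required descents. The difference is that you spell out the index bookkeeping (the in-block relations cover every non-multiple of $k$ in $[N-1]$, and $m=\lfloor (N-1)/k\rfloor$ when $1\le s\le k$), which the paper leaves implicit.
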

	
	\begin{proof}
		A linear extension assigns the values \(1,2,\ldots,N\) to the positions in a way that is increasing along all displayed relations.  Hence entries increase inside each block and satisfy \(x_{i,k}>x_{i+1,1}\) at every boundary.  These are exactly the required ascents and descents.  Conversely, every permutation in \(\D_{N,k}\) satisfies these relations, and so gives a linear extension of \(B_{m,s,k}\).
	\end{proof}
	
	A vincular pattern of length three is a pattern in $S_3$ together with a set of adjacencies.  We use the notation $\underline{12}3$ for a pattern in which the entries realizing the first two letters must be adjacent, $1\underline{23}$ for a pattern in which the entries realizing the last two letters must be adjacent, and $\underline{123}$ for a fully consecutive pattern.  We write $\D_{N,k}(p)$ for the set of elements of $\D_{N,k}$ avoiding the pattern $p$.
	
	Let $T(z)$ be the generating function for complete ordered $k$-ary trees by internal vertices:
	\[
	T(z)=1+zT(z)^k.
	\]
	Then $T(z)^s$ is the generating function for ordered forests of $s$ complete $k$-ary trees.  Lagrange inversion gives the Raney number \cite{Raney,StanleyEC2}
	\begin{equation}\label{eq:Raney}
		[z^m]T(z)^s=\frac{s}{mk+s}\binom{mk+s}{m}.
	\end{equation}
	We shall also use the Fuss-Catalan number
	\begin{equation}\label{eq:Fuss}
		C^{(k)}_{m+1}=\frac{1}{(k-1)(m+1)+1}\binom{k(m+1)}{m+1}.
	\end{equation}
	
	We shall also use the following symmetry in the complete-block case.  For a permutation
	\(\pi=\pi_1\cdots\pi_N\), let
	\[
	\operatorname{rc}(\pi)_i=N+1-\pi_{N+1-i}
	\]
	be its reverse-complement.
	
	\begin{lemma}\label{lem:rc}
		If $N=nk$, then $\operatorname{rc}$ is an involution of $\D_{N,k}$.  It sends avoidance of a classical or vincular pattern $p$ to avoidance of the reverse-complement pattern $\operatorname{rc}(p)$, with the adjacencies reversed.
	\end{lemma}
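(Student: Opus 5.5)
We verify the two claims directly from the definition of $\operatorname{rc}$, writing $\sigma=\operatorname{rc}(\pi)$ with $\sigma_i=N+1-\pi_{N+1-i}$.

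\emph{Step 1: descents.} For $1\le i\le N-1$ we have $\sigma_i>\sigma_{i+1}$ if and only if $N+1-\pi_{N+1-i}>N+1-\pi_{N-i}$. This holds if and only if $\pi_{N-i}>\pi_{N+1-i}$, that is, $N-i\in\Des(\pi)$. Hence
\[
\Des(\operatorname{rc}(\pi))=\{N-j: j\in\Des(\pi)\}.
\]
When $N=nk$ the prescribed descent set is $\{k,2k,\ldots,(n-1)k\}$. This set is invariant under $j\mapsto N-j=(n-j/k)k$, since $j=ik$ with $1\le i\le n-1$ goes to $(n-i)k$ with $1\le n-i\le n-1$. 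Thus $\operatorname{rc}$ maps $\D_{N,k}$ into itself. Since $\operatorname{rc}\circ\operatorname{rc}=\mathrm{id}$ is immediate from the formula, $\operatorname{rc}$ is an involution of $\D_{N,k}$.

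This is the only point where $N=nk$ is used. If $s<k$, reflecting the positions moves the short block to the front, and the descent set is not preserved.

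\emph{Step 2: patterns.} Suppose $\pi$ has an occurrence of $p\in S_3$ at positions $i_1<i_2<i_3$. Then $\sigma$ has entries at positions $N+1-i_3<N+1-i_2<N+1-i_1$. Reading these left to right gives the values $N+1-\pi_{i_3}$, $N+1-\pi_{i_2}$, $N+1-\pi_{i_1}$. Their relative order is obtained from that of $\pi_{i_1}\pi_{i_2}\pi_{i_3}$ by reversing and then complementing, which is exactly $\operatorname{rc}(p)$.

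For the adjacency conditions, note that $i_{j+1}=i_j+1$ holds if and only if $N+1-i_j=(N+1-i_{j+1})+1$. So the adjacent pairs among the three positions correspond exactly, with their order in the pattern reversed. For example, $\underline{12}3$ becomes $1\underline{23}$-type, with the pattern letters replaced by those of $\operatorname{rc}(p)$.

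Thus occurrences of $p$ in $\pi$ correspond bijectively to occurrences of $\operatorname{rc}(p)$, with reversed adjacencies, in $\operatorname{rc}(\pi)$. The converse direction follows because $\operatorname{rc}$ is an involution. Therefore $\pi$ avoids $p$ if and only if $\operatorname{rc}(\pi)$ avoids $\operatorname{rc}(p)$ with reversed adjacencies.

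There is no real obstacle here. The only care needed is the index bookkeeping for the adjacencies and stating clearly why complete blocks are required.
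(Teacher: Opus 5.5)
Your proof is correct and follows the same route as the paper. The descent set of $\operatorname{rc}(\pi)$ is the reflection $j\mapsto N-j$ of $\Des(\pi)$, and this reflection fixes $\{k,2k,\ldots,(n-1)k\}$; patterns transform by reversing the three chosen positions and complementing values, with adjacencies kept but moved to the opposite side. You simply write out the index bookkeeping that the paper leaves implicit.

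One small correction to your side remark: for $s<k$, the descent set fails to be preserved only when $m\ge1$. When $N<k$, the class $\D_{N,k}$ consists of the identity alone and is trivially fixed by $\operatorname{rc}$.
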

	
	\begin{proof}
		The descent set of $\operatorname{rc}(\pi)$ is the reflection of the descent set of $\pi$.  Since
		$\{k,2k,\ldots,(n-1)k\}$ is invariant under reflection in $[N-1]$, the map preserves $\D_{N,k}$.  The assertion about patterns follows directly from reversing the three chosen positions and complementing their relative order; adjacency between consecutive chosen positions is preserved, but its side is reversed.
	\end{proof}
	
	\section{Classical patterns of length three}
	
	\subsection{The patterns $132$ and $231$}
	
	Let $\F^{(k)}_{m,s}$ denote the set of ordered forests of $s$ complete ordered $k$-ary trees with altogether $m$ internal vertices.  Thus
	\[
	|\F^{(k)}_{m,s}|=[z^m]T(z)^s
	=\frac{s}{mk+s}\binom{mk+s}{m}.
	\]
	
	\begin{theorem}\label{thm:classical132231}
		Let $k\ge2$, $m\ge0$, $1\le s\le k$, and $N=mk+s$.  There are recursive bijections
		\[
		\D_{N,k}(132)\longleftrightarrow \F^{(k)}_{m,s},
		\qquad
		\D_{N,k}(231)\longleftrightarrow \F^{(k)}_{m,s}.
		\]
		Consequently,
		\[
		|\D_{N,k}(132)|=|\D_{N,k}(231)|=\frac{s}{N}\binom{N}{m}.
		\]
	\end{theorem}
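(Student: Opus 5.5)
The plan is to decompose by the position of the maximum entry $N$. Both bijections will be built from the same recursive decomposition, and both will satisfy the generating-function identity that characterizes $T(z)^s$.

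\emph{Location of $N$.} Entries increase inside each block, so $N$ must be the last entry of some block. Either $N=x_{j,k}$ for some $1\le j\le m$, or $N$ is the last entry of the final block $B_{m+1}$. The second option is impossible when $s=1$ and $m\ge1$, because then $x_{m+1,1}=N$ would have to be smaller than $x_{m,k}$.

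\emph{The pattern $132$.} If $\pi$ avoids $132$, every entry to the left of $N$ exceeds every entry to the right of $N$. This gives two cases.
\begin{itemize}[leftmargin=*]
\item If $N=x_{j,k}$ with $j\le m$, write $\pi=\alpha N\beta$. After standardization, $\alpha$ is a $132$-avoider in $\D_{(j-1)k+(k-1),k}$, whose last block has length $k-1$. Likewise $\beta$ is a $132$-avoider in $\D_{(m-j)k+s,k}$, whose last block has length $s$.
\item If $N$ ends the last block and $s\ge2$, removing $N$ leaves a $132$-avoider in $\D_{mk+s-1,k}$, whose last block has length $s-1$.
\end{itemize}
Conversely, any such data reassemble into an element of $\D_{N,k}(132)$. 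The blocks of $\alpha$ together with $N$ complete block $j$. The boundary descent $N>x_{j+1,1}$ is automatic. Since every value of $\alpha N$ exceeds every value of $\beta$, no $132$ occurrence can straddle $N$.

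Writing $A_s(z)$ for the generating function by $m$, this yields
\[
A_s=[s\ge2]\,A_{s-1}+[s=1]+z\,A_{k-1}A_s .
\]
The forest side satisfies the same recursion, since $T^{s}=T^{s-1}+zT^{k-1}T^{s}$ follows from $T-1=zT^k$. Combinatorially, look at the last tree of a forest in $\F^{(k)}_{m,s}$.
\begin{itemize}[leftmargin=*]
\item If the last tree is a single leaf, deleting it leaves a forest of $s-1$ trees; when $s=1$ this is the empty forest.
\item Otherwise its root is an internal vertex, counted by $z$. Deleting the root leaves an ordered list of $s-1+k$ trees: the first $s-1$ trees of the forest followed by the $k$ subtrees of the root.
\end{itemize}
I would cut this list into its first $k-1$ trees and its last $s$ trees, and match these with $\alpha$ and $\beta$ respectively. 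Recursing on both sides then gives the bijection, by induction on $N$. The index bookkeeping (here $1\le k-1$ and $1\le s\le k$, with the empty cases handled) is routine.

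\emph{The pattern $231$.} The argument is identical, except that $231$-avoidance forces every entry left of $N$ to be \emph{smaller} than every entry right of $N$. The descents are still guaranteed: the boundary after $N$ is a descent because $N$ is maximal, and the boundaries inside $\alpha$ and inside $\beta$ are inherited. The reassembled permutation avoids $231$ because values increase from the $\alpha N$ part to the $\beta$ part. So the same recursion, and hence the same forest bijection, applies.

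\emph{Expected obstacle and conclusion.} The main step to get right is the matching of the $s-1+k$ trees with the pair $(\alpha,\beta)$, so that the map is genuinely a bijection rather than a mere numerical coincidence. This includes the degenerate case $s=1$, and the case where $\alpha$ is empty, which happens when $j=1$ and $k-1$ leaves are present. Once this is settled, the count $\frac{s}{N}\binom{N}{m}$ follows from \eqref{eq:Raney}.
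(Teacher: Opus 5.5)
Your proposal is correct and follows essentially the same proof as the paper. Both decompose at the maximum $N$ and use the value separation between $\alpha$ and $\beta$ forced by $132$ (resp.\ $231$) avoidance. Both then glue a forest of $k-1$ trees and a forest of $s$ trees under a new root. The only difference is cosmetic: the paper puts the new root over the \emph{first} $k$ trees and prepends leaves, while you put it over the \emph{last} $k$ trees and append leaves, and this mirror-image convention works equally well.

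Two small slips. First, $\alpha$ is never empty: for $j=1$ it is the increasing word of length $k-1\ge1$, which corresponds to $k-1$ leaves. Second, in the $231$ case it is the values of $\alpha$, not of $\alpha N$, that lie below those of $\beta$. Your conclusion still holds, since $N$ is maximal and so cannot serve in a straddling occurrence.
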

	
	\begin{proof}
		We give the bijection for $132$ first.  If $m=0$, then the only permutation is $12\cdots s$, and it is sent to the forest of $s$ leaves.  Now let $m>0$ or $s>1$, and let $N=mk+s$ be the maximum entry.  Since entries increase inside each block, $N$ is the last entry of a block.
		
		If $\pi\in\D_{N,k}(132)$ and $N$ is at position $p$, then every entry to the left of $N$ is larger than every entry to the right of $N$.  Otherwise there would be entries $a$ to the left and $c$ to the right with $a<c<N$, giving a $132$ occurrence $a,N,c$.
		
		If $N$ is the last entry of the final block and $s\ge2$, delete $N$.  This gives an element $\pi'\in\D_{mk+s-1,k}(132)$.  The corresponding forest is obtained by adjoining one leaf in front of the forest associated with $\pi'$:
		\[
		\Phi^{132}_{m,s}(\pi)=(\bullet,\Phi^{132}_{m,s-1}(\pi')).
		\]
		
		Otherwise $N$ is the last entry of the $i$th complete block.  Write
		\[
		\pi=L\,N\,R,
		\]
		where $L$ consists of the preceding $(i-1)$ complete blocks and the first $k-1$ entries of the $i$th block, while $R$ is the remaining suffix.  The condition above forces all values in $L$ to be larger than all values in $R$.  Hence standardization gives
		\[
		\st(L)\in\D_{(i-1)k+k-1,k}(132),
		\qquad
		\st(R)\in\D_{(m-i)k+s,k}(132).
		\]
		Write
		\[
		\Phi^{132}_{i-1,k-1}(\st(L))=(T_1,\ldots,T_{k-1})
		\]
		and
		\[
		\Phi^{132}_{m-i,s}(\st(R))=(T_k,U_2,\ldots,U_s).
		\]
		Then $\Phi^{132}_{m,s}(\pi)$ is the forest whose first tree has a new root with ordered children $T_1,\ldots,T_k$, followed by $U_2,\ldots,U_s$.
		
		This construction is invertible.  If the first tree of a forest is a leaf, remove it and append the new maximum at the end of the final block.  If the first tree is rooted with children $T_1,\ldots,T_k$, reconstruct a left factor from $(T_1,\ldots,T_{k-1})$ and a right factor from $(T_k,U_2,\ldots,U_s)$; inflate the right factor by the smaller value interval, inflate the left factor by the next larger interval, and place the maximum between them.  The inequalities just described prevent cross $132$ occurrences, and the recursive factors avoid $132$, so the inverse is well defined.
		
		For $231$, the same recursive decomposition is used, but a $231$-avoiding permutation with maximum $N$ has every entry to the left of $N$ smaller than every entry to the right of $N$.  Thus the inverse is the same except that the left recursive factor is inflated by the smaller interval and the right recursive factor by the larger interval.  The same argument proves that the two constructions are inverse bijections.  Counting the forests by \eqref{eq:Raney} completes the proof.
	\end{proof}
	
	\begin{remark}\label{rem:raneypathbijection}
		Composing either forest bijection with the standard Lukasiewicz traversal gives a path model.  A forest in $\F^{(k)}_{m,s}$ is read from left to right in depth-first order; each internal vertex contributes an up-step of size $k-1$, and each leaf contributes a down-step of size $1$.  The path starts at height $s$, has $m$ up-steps and $(k-1)m+s$ down-steps, stays nonnegative, and ends at height $0$.  This is the usual Raney-path model, now attached directly to the avoiding permutations.
	\end{remark}
	
\begin{example}
	To illustrate the recursive bijection of Theorem~\ref{thm:classical132231}, we give two concrete examples with $k=3$, $m=1$, $s=2$ (so $N=5$).  The class $\D_{5,3}(132)$ has exactly $R^{(3)}_{1,2}=\frac{2}{5}\binom{5}{1}=2$ permutations: $23415$ and $34512$.  
	Figure~\ref{fig:examples} shows the mapping for these two permutations.  Subfigure~\ref{fig:23415} corresponds to $\pi=23415$, and Subfigure~\ref{fig:34512} corresponds to $\pi=34512$.  In each case, the original permutation is shown at the top with its block structure; a downward arrow labeled ``recursive decomposition'' indicates the application of the bijection; the resulting ordered forest of two complete $3$-ary trees with one internal vertex is displayed at the bottom.  The labels on the leaves indicate the original values from which they derive.
\end{example}
\FloatBarrier   
	\begin{figure}[htbp]
	\centering
	\begin{subfigure}{0.48\textwidth}
		\centering
		\begin{tikzpicture}[
			scale=0.65,
			>=Stealth,
			block/.style={rectangle, draw, thick, rounded corners=2pt, minimum height=0.5cm, minimum width=0.6cm, align=center, font=\footnotesize},
			treenode/.style={circle, draw, thick, minimum size=0.4cm, inner sep=0pt, font=\footnotesize},
			]
			\node[block, fill=blue!20] at (0,0) {2};
			\node[block, fill=blue!20] at (0.9,0) {3};
			\node[block, fill=blue!20] at (1.8,0) {4};
			\node[block, fill=green!20] at (3.0,0) {1};
			\node[block, fill=green!20] at (3.9,0) {5};
			
			\draw[->, thick] (2.0,-0.6) -- (2.0,-1.8) node[midway, right, font=\tiny] {recursive decomposition};
			
			\begin{scope}[yshift=-2.8cm, scale=1.2, every node/.append style={font=\footnotesize}]
				\begin{scope}[level distance=0.8cm, sibling distance=0.8cm]
					\node[treenode, label=above:{Tree 1}] (root1) {}
					child { node[treenode, label=below:{2}] {} }
					child { node[treenode, label=below:{3}] {} }
					child { node[treenode, label=below:{1}] {} };
				\end{scope}
				\node[treenode, label=below:{5}, right=2.2cm of root1] (leaf) {};
				\node[above=0.2cm of leaf, font=\tiny] {Tree 2};
			\end{scope}
		\end{tikzpicture}
		\caption{The permutation $23415$ and its image under the recursive bijection.}
		\label{fig:23415}
	\end{subfigure}
	\hfill
	\begin{subfigure}{0.48\textwidth}
		\centering
		\begin{tikzpicture}[
			scale=0.65,
			>=Stealth,
			block/.style={rectangle, draw, thick, rounded corners=2pt, minimum height=0.5cm, minimum width=0.6cm, align=center, font=\footnotesize},
			treenode/.style={circle, draw, thick, minimum size=0.4cm, inner sep=0pt, font=\footnotesize},
			]
			\node[block, fill=blue!20] at (0,0) {3};
			\node[block, fill=blue!20] at (0.9,0) {4};
			\node[block, fill=blue!20] at (1.8,0) {5};
			\node[block, fill=green!20] at (3.0,0) {1};
			\node[block, fill=green!20] at (3.9,0) {2};
			
			\draw[->, thick] (2.0,-0.6) -- (2.0,-1.8) node[midway, right, font=\tiny] {recursive decomposition};
			
			\begin{scope}[yshift=-2.8cm, scale=1.2, every node/.append style={font=\footnotesize}]
				\begin{scope}[level distance=0.8cm, sibling distance=0.8cm]
					\node[treenode, label=above:{Tree 1}] (root1) {}
					child { node[treenode, label=below:{3}] {} }
					child { node[treenode, label=below:{4}] {} }
					child { node[treenode, label=below:{1}] {} };
				\end{scope}
				\node[treenode, label=below:{2}, right=2.2cm of root1] (leaf) {};
				\node[above=0.2cm of leaf, font=\tiny] {Tree 2};
			\end{scope}
		\end{tikzpicture}
		\caption{The permutation $34512$ and its image under the recursive bijection.}
		\label{fig:34512}
	\end{subfigure}
	\caption{Illustration of the recursive bijection in Theorem~\ref{thm:classical132231} for the two permutations in $\mathcal{D}_{5,3}(132)$. Each permutation maps to a forest of two complete $3$-ary trees with one internal vertex.}
	\label{fig:examples}
\end{figure}
\FloatBarrier

	\subsection{The patterns $213$ and $312$}
	
	The following two elementary completion lemmas are the point at which the incomplete last block must be treated carefully.
	
	\begin{lemma}\label{lem:312completion}
		Let $\pi\in\D_{mk+s,k}(312)$.  Then the entries $x_{m+1,2},\ldots,x_{m+1,s}$, when present, are the largest $s-1$ entries of $\pi$.  Consequently, appending one new largest entry to the last block gives a bijection
		\[
		\D_{mk+s,k}(312)\longleftrightarrow \D_{mk+s+1,k}(312)
		\]
		for $1\le s<k$.
	\end{lemma}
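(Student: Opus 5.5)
First I will show that the entries $x_{m+1,2},\ldots,x_{m+1,s}$ are the $s-1$ largest values. The case $m=0$ is immediate, since then $\pi=12\cdots s$. So assume $m\ge1$ and $s\ge2$. The key witness is the descent at the last boundary, $x_{m,k}>x_{m+1,1}$. Suppose some $x_{m+1,j}$ with $j\ge2$ is smaller than some entry $y$ lying outside $\{x_{m+1,2},\ldots,x_{m+1,s}\}$. I will produce an occurrence of $312$ in each possible case.

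\emph{Case $y$ lies in an earlier block.} Consider $y$, $x_{m+1,1}$, $x_{m+1,j}$. We have $x_{m+1,1}<x_{m+1,j}<y$ and $y$ comes first, so these entries form $312$ if $y$ is at an earlier position.

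\emph{Case $y=x_{m+1,1}$.} This is impossible, because the last block is increasing.

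There is one subtlety: an occurrence of $312$ needs the middle-valued letter to come last, and the roles above are as required ($y$ largest, then the smallest, then the middle). Hence every entry outside the tail of the last block is smaller than every $x_{m+1,j}$ with $j\ge2$. Since these tail entries are increasing, they must be exactly $N-s+2,\ldots,N$ in order.

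Next I will prove the bijection for $1\le s<k$. The forward map sends $\pi$ to $\pi N'$, where $N'=N+1$ is appended at the end. This stays inside $\D_{N+1,k}$: the last block still has length at most $k$ and is still increasing, so no new descent is created and the boundary descents are unchanged. It also creates no $312$. Indeed, a new occurrence would have to use $N'$, but $N'$ sits in the last position with the largest value, so it can only play the role of the letter $2$. That would require a later-smaller pattern $3$ then $1$ to precede it with $3>N'$, which is impossible.

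The inverse deletes the last entry. By the first part applied to $\D_{N+1,k}(312)$, whose last block has length $s+1\ge2$, the last entry is the maximum $N+1$. Deleting it gives an element of $\D_{N,k}$ that still avoids $312$, because avoidance passes to subsequences. The two maps are visibly mutually inverse.

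The main obstacle is only the bookkeeping in the first part. One must check that the descent $x_{m,k}>x_{m+1,1}$ is not actually needed, since the smallest letter $x_{m+1,1}$ of the last block already serves as the ``1''. One must also check that the argument correctly excludes $x_{m+1,1}$ itself. I expect both points to be routine.
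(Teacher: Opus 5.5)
Your proof is correct and follows essentially the same route as the paper. A larger entry $y$ in an earlier block, together with two entries of the final block, forms a $312$: you use the pair $x_{m+1,1},x_{m+1,j}$, while the paper uses the adjacent pair $x_{m+1,a},x_{m+1,a+1}$. The bijection is then appending and deleting the maximum, exactly as in the paper. Your explicit use of the first part, applied to the final block of length $s+1\ge2$, to confirm that the deleted last entry is $N+1$ fills in a step the paper leaves implicit.
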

	
	\begin{proof}
		If $1\le a<s$ and some earlier entry $y$ were larger than $x_{m+1,a+1}$, then
		\[
		y,\ x_{m+1,a},\ x_{m+1,a+1}
		\]
		would form a $312$ pattern.  Hence no entry before the last block is larger than $x_{m+1,a+1}$.  Since the last block is increasing, the entries after $x_{m+1,1}$ in the last block must therefore be precisely the largest $s-1$ entries.
		
		Appending a new largest entry cannot create a new $312$ pattern: the new entry is in the final position, so it cannot be the first or second element of such a pattern, and it cannot be the third element because no earlier entry is larger than it.  Deleting the final largest entry is the inverse operation and plainly preserves avoidance.
	\end{proof}
	
	\begin{lemma}\label{lem:213completion}
		Let $\pi\in\D_{mk+s,k}(213)$.  Then the final block $B_{m+1}$ consists of consecutive values.  Consequently, for $1\le s<k$, inserting after $x_{m+1,s}$ a new value immediately above $x_{m+1,s}$ in value order gives a bijection
		\[
		\D_{mk+s,k}(213)\longleftrightarrow \D_{mk+s+1,k}(213).
		\]
		Equivalently, if the old last entry is $b$, replace the value set by $[N+1]$ by inserting the new last entry $b+1$ and increasing every old value greater than $b$ by one.
	\end{lemma}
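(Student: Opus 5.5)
The plan has two parts. First I show that the last block is an interval of values, by a direct pattern argument of the same kind as in Lemma~\ref{lem:312completion}. Then I check that the insertion map and the deletion map are well defined on the two sides and are mutually inverse. The only pattern-theoretic content beyond the first part is showing that the inserted entry creates no new $213$ occurrence.

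\textbf{Consecutive values.} If $s=1$ there is nothing to prove, so assume $s\ge2$. Suppose some value $v$ with $x_{m+1,1}<v<x_{m+1,s}$ is not in $B_{m+1}$. Then $v$ sits in an earlier position. Because the last block is increasing, there is an index $a$ with $x_{m+1,a}<v<x_{m+1,a+1}$. The triple $v,\,x_{m+1,a},\,x_{m+1,a+1}$ then appears in this left-to-right order and forms a $213$, which is a contradiction. Hence $B_{m+1}$ is an interval of values.

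\textbf{The forward map is well defined.} Let $\pi\in\D_{mk+s,k}(213)$ with last entry $b$, and let $\pi^+$ be obtained by raising every value greater than $b$ by one and appending $b+1$.
\begin{itemize}[nosep]
\item The raising is order preserving, so all old ascents and descents survive; in particular the boundary descent $x_{m,k}>x_{m+1,1}$ is kept.
\item Since $s+1\le k$, the enlarged last block has length at most $k$, and it is increasing because $b+1>b$. So $\pi^+\in\D_{mk+s+1,k}$.
\end{itemize}
Occurrences of $213$ in $\pi^+$ that avoid the new entry correspond to occurrences in $\pi$, again because the raising is order preserving. An occurrence that uses $b+1$ must use it as the final letter ``3''. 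Such an occurrence is a pair $y$ before $z$ with $z<y<b+1$, so $y\le b$.
\begin{itemize}[nosep]
\item If $z=b$, then $y>b$ is impossible.
\item Otherwise $z$ precedes the old last entry $b$. Then $y\ne b$, so $y<b$, and $y,z,b$ is a $213$ in $\pi$. This is a contradiction.
\end{itemize}
So $\pi^+$ avoids $213$.

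\textbf{The inverse map.} For $\sigma\in\D_{mk+s+1,k}(213)$, the consecutive-values property applied to its last block, which has $s+1\ge2$ entries, shows that its last entry equals its penultimate entry plus one. Deleting the last entry and standardizing gives a permutation in $\D_{mk+s,k}$. Deletion cannot create a $213$, and removing the final entry leaves the descent set $\{k,\ldots,mk\}$ intact. The two maps are visibly inverse: raising and then deleting undoes itself, and conversely the deleted value is always the predecessor of the last entry plus one.

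The main obstacle is the case analysis for an occurrence that ends at the new entry. It is exactly there that the choice of $b+1$, rather than a new global maximum as in Lemma~\ref{lem:312completion}, matters.
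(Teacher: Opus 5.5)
Your proof is correct and follows the paper's argument essentially step for step. A gap in the last block gives the $213$ occurrence $y,\,x_{m+1,a},\,x_{m+1,a+1}$, and a new occurrence ending at $b+1$ is pushed back to an old occurrence ending at $b$ once the ``2'' is shown not to be $b$. You are a little more explicit than the paper in two places: checking membership in $\D_{mk+s+1,k}$, and using the interval property of the enlarged last block to see that deletion really inverts the insertion, which the paper leaves implicit.
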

	
	\begin{proof}
		If the final block had a gap, then for some $a<s$ there would be an earlier entry $y$ satisfying
		\[
		x_{m+1,a}<y<x_{m+1,a+1}.
		\]
		The three entries $y,x_{m+1,a},x_{m+1,a+1}$ would then form a $213$ pattern.  Thus the values in the final block are consecutive.
		
		It remains to check the insertion.  No old occurrence is created because the relative order of all old entries is unchanged.  Suppose a new $213$ occurrence is created.  Since the inserted value $b+1$ is in the final position, it must be the third, largest entry of the occurrence.  Thus there are old entries $a_1,a_2$ in positions $i<j$ with
		\[
		a_2<a_1<b+1.
		\]
		The value $a_1$ cannot be the old final entry $b$, because $b$ occurs after $a_2$.  Hence $a_1<b$, and then $a_1,a_2,b$ already formed a $213$ pattern before insertion, a contradiction.  Deletion of the last entry is the inverse operation and preserves avoidance.
	\end{proof}
	
	\begin{theorem}
		\label{thm:classical213312}
		Let \(k\ge2\), \(m\ge0\), \(1\le s\le k\), and \(N=mk+s\).  The classes \(\D_{N,k}(213)\) and \(\D_{N,k}(312)\) are in explicit bijection with complete ordered \(k\)-ary trees with \(m+1\) internal vertices.  Consequently,
		\[
		|\D_{N,k}(213)|=|\D_{N,k}(312)|=C^{(k)}_{m+1}.
		\]
	\end{theorem}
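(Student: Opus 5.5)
The plan is to reduce everything to the complete-block case and then transfer the forest bijections of Theorem~\ref{thm:classical132231} through reverse-complement.

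\textbf{Step 1 (reduction to $s=k$).} Iterating Lemma~\ref{lem:312completion} (append a new maximum) and Lemma~\ref{lem:213completion} (insert $b+1$ after the last entry $b$) $k-s$ times gives explicit bijections
\[
\D_{mk+s,k}(312)\longleftrightarrow \D_{(m+1)k,k}(312),\qquad
\D_{mk+s,k}(213)\longleftrightarrow \D_{(m+1)k,k}(213).
\]
It therefore suffices to treat $N=(m+1)k$, where all $m+1$ blocks are complete.

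\textbf{Step 2 (symmetry).} For $N=nk$, Lemma~\ref{lem:rc} shows that $\operatorname{rc}$ is an involution of $\D_{N,k}$ that carries $p$-avoidance to $\operatorname{rc}(p)$-avoidance. Reversing and complementing gives $\operatorname{rc}(132)=213$ and $\operatorname{rc}(231)=312$: reversing $132$ gives $231$, whose complement is $213$, and reversing $231$ gives $132$, whose complement is $312$. Hence
\[
\D_{(m+1)k,k}(213)\cong\D_{(m+1)k,k}(132),\qquad \D_{(m+1)k,k}(312)\cong\D_{(m+1)k,k}(231).
\]

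\textbf{Step 3 (forests to trees).} Writing $(m+1)k=mk+k$ puts us in the case $s=k$ of Theorem~\ref{thm:classical132231}. This gives bijections of both classes with $\F^{(k)}_{m,k}$, the ordered forests of $k$ complete $k$-ary trees with $m$ internal vertices. Such a forest corresponds bijectively to a single complete $k$-ary tree with $m+1$ internal vertices: attach a new root whose $k$ ordered children are the trees of the forest. Composing the three steps gives the explicit bijection with trees. For the count, note that $[z^m]T^k=[z^{m+1}]T$ because $T=1+zT^k$. Then either \eqref{eq:Raney} with $s=k$, or directly,
\[
\frac{k}{mk+k}\binom{mk+k}{m}=\frac{1}{m+1}\binom{(m+1)k}{m}=\frac{1}{(k-1)(m+1)+1}\binom{k(m+1)}{m+1}=C^{(k)}_{m+1},
\]
where the middle equality is a routine binomial identity.

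\textbf{Main obstacle.} The only delicate point is the reduction in Step 1. Reverse-complement is an involution of $\D_{N,k}$ only when $k\mid N$, so the completion lemmas must be applied first and iterated correctly. Each application raises $s$ by one while staying in the range $s<k$, and the final step lands on $s=k$ with $m$ unchanged. The lemmas as stated hold for each $1\le s<k$, so the iteration is valid. The remaining checks, namely that $\operatorname{rc}$ maps the patterns as claimed and the binomial identity, are routine.
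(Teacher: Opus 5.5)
Your proposal is correct and follows the paper's proof essentially step for step: complete the final block using Lemma~\ref{lem:312completion} or Lemma~\ref{lem:213completion}, apply reverse-complement to reach the complete-block $231$- or $132$-class, invoke the forest bijection of Theorem~\ref{thm:classical132231} with $s=k$, and adjoin a common root. Your explicit check that $\frac{k}{mk+k}\binom{mk+k}{m}=C^{(k)}_{m+1}$ is a small addition; the paper instead cites the standard count of complete $k$-ary trees.
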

	
	\begin{proof}
		For $312$, apply Lemma~\ref{lem:312completion} repeatedly until the last block has length $k$.  This gives a bijection
		\[
		\D_{mk+s,k}(312)\longleftrightarrow \D_{(m+1)k,k}(312).
		\]
		Apply reverse-complement, which sends the complete-block $312$-class to the complete-block $231$-class.  Then apply the forest bijection of Theorem~\ref{thm:classical132231} with parameters $(m,k)$.  The result is a forest of \(k\) complete ordered \(k\)-ary trees with altogether \(m\) internal vertices.  Adding a common root whose children are these \(k\) trees gives a complete ordered \(k\)-ary tree with \(m+1\) internal vertices.
		
		For $213$, use Lemma~\ref{lem:213completion} to complete the final block, apply reverse-complement to pass from complete-block $213$-avoidance to complete-block $132$-avoidance, and then use the forest bijection of Theorem~\ref{thm:classical132231}.  Again, adjoining a common root to the resulting forest of \(k\) trees gives a complete ordered \(k\)-ary tree with \(m+1\) internal vertices.
		
		Each step in both constructions is reversible: delete the common root, apply the appropriate forest inverse, apply reverse-complement, and finally undo the completion of the last block.  Thus we have explicit bijections.  The number of complete ordered \(k\)-ary trees with \(m+1\) internal vertices is the Fuss-Catalan number in \eqref{eq:Fuss}.
	\end{proof}
	
	\begin{remark}\label{rem:fussdyck}
		The preorder Lukasiewicz traversal of the resulting complete \(k\)-ary tree is the usual \(k\)-Dyck path with \(m+1\) up-steps.  Thus the maps above are not merely enumerative reductions: they identify each $213$- or $312$-avoider with its Fuss-Catalan tree and path.
	\end{remark}
	
	\subsection{The patterns $123$ and $321$}
	
	\begin{proposition}\label{prop:classical123}
		If $k\ge3$ and $N\ge3$, then $\D_{N,k}(123)=\varnothing$.  If $k=2$, then
		\[
		|\D_{2m+s,2}(123)|=C_{m+1},\qquad s=1,2.
		\]
	\end{proposition}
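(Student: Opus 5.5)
The first assertion needs only the block structure. Suppose $k\ge3$ and $N\ge3$, and write $N=mk+s$. If $m\ge1$, the first block $B_1$ is complete and increasing of length $k\ge3$. If $m=0$, the single block has length $s=N\ge3$. In either case three entries of one block form a $123$ occurrence, so $\D_{N,k}(123)=\varnothing$.

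For $k=2$ I would argue through the standard structure of $123$-avoiders: $\pi$ avoids $123$ if and only if its entries that are not left-to-right minima form a decreasing sequence. Write the blocks as $v_i<t_i$, with valleys $v_i=x_{i,1}$ and tops $t_i=x_{i,2}$.

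I will first show that every top $t_i$ is not a left-to-right minimum, because $v_i<t_i$ precedes it. Next I will show that every valley $v_i$ whose block is complete is a left-to-right minimum. If some earlier entry $y$ satisfied $y<v_i$, then $y,v_i,t_i$ would be a $123$ occurrence. Consequently the tops form a decreasing sequence $t_1>t_2>\cdots$, and the valleys of complete blocks form a decreasing sequence $v_1>v_2>\cdots$. The descents $t_i>v_{i+1}$ are then automatic. Conversely, any such interleaving with $t_i>v_i$ is a permutation in $\D_{N,2}$ whose non-left-to-right-minima are decreasing, hence avoids $123$.

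For $s=2$ this gives a ballot encoding. Read the values from $N$ down to $1$ and label each one $t$ or $v$. The conditions say exactly that $t_i$ is read before $v_i$ for every $i$, so every prefix contains at least as many $t$'s as $v$'s. With $m+1$ letters of each kind, this gives the count $C_{m+1}$, for instance by identifying the word with a Dyck path or a two-row standard Young tableau. Each step here is reversible, so this is a bijection.

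The main obstacle is the case $s=1$ (odd $N$), where the final singleton $x_{m+1,1}$ need not be a left-to-right minimum. Already for $N=3$ the permutation $132$ shows this. I would split on whether the last entry $u$ is a left-to-right minimum.

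If $u$ is a left-to-right minimum, then $u=1$, and the remaining entries give a ballot word with $m$ $t$'s and $m$ $v$'s, contributing $C_m$.

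If $u$ is not a left-to-right minimum, then $u$ joins the decreasing sequence of tops as $t_{m+1}<t_m$. The only extra requirement is $u<t_m$, together with $u$ exceeding some earlier valley. In the reversed-reading word this means the letter for $u$ is a $t$ placed after $t_m$ and before the final letter $v_m$.

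I would then check that the two contributions add up to $C_{m+1}$. The cleanest route is probably a direct bijection with the $s=2$ case: map $\pi\in\D_{2m+1,2}(123)$ to a word with $m+1$ letters of each kind by inserting one extra $v$ at a canonical place determined by $u$. I would verify this on small cases such as $N=3,5$. Failing that, I would fall back on Lewis's result \cite{LewisNote} that $123$-avoiding alternating permutations of length $2m+1$ are Catalan.
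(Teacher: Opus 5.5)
Your argument for $k\ge3$ is the paper's argument. For $k=2$ the paper gives no proof of its own and simply cites Lewis. Your route is therefore a genuinely different, self-contained proof: valleys of complete blocks are forced to be left-to-right minima, so the tops must decrease, and the class becomes a set of ballot words on the values. It also gives an explicit bijection with Dyck paths, which fits the bijective style of the rest of the paper. For $s=2$ the argument is complete and correct.

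For $s=1$ your case split is correct, but you never show that the two contributions sum to $C_{m+1}$. The ``canonical place'' for the extra $v$ is left unspecified, so at that point you are relying on the same citation as the paper.

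The split is unnecessary, and the missing step is short. Label $u$ as a $t$ in both cases. Your own analysis then characterizes $\D_{2m+1,2}(123)$ by three conditions:
\begin{itemize}
\item $t_1>\cdots>t_m>u$;
\item $v_1>\cdots>v_m$;
\item $t_i>v_i$ for $i\le m$.
\end{itemize}
There is no condition relating $u$ to the valleys, since your two cases are exactly $u<v_m$ and $u>v_m$. Hence the reading word is an arbitrary ballot sequence with $m+1$ letters $t$ and $m$ letters $v$. Appending one final $v$ gives a Dyck word of semilength $m+1$, so the canonical place for the extra $v$ is simply the end.

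On permutations, this is the map $\D_{2m+2,2}(123)\to\D_{2m+1,2}(123)$ that deletes the entry $1$.
\begin{itemize}
\item The entry $1$ must be the last valley $x_{m+1,1}$, because valleys decrease and each top exceeds its valley.
\item After deleting it, the new boundary $x_{m,2}>x_{m+1,2}$ is a descent, because the tops decrease.
\item The inverse inserts a new minimum just before the final entry. This cannot create a $123$, since only one entry follows it.
\end{itemize}
This is the same last-block completion device the paper uses in Lemmas~\ref{lem:312completion} and~\ref{lem:213completion}.
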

	
	\begin{proof}
		For $k\ge3$ and $N\ge3$, the first three entries lie in the first block and are increasing, hence contain a classical $123$ pattern.  For $k=2$, this is the classical alternating case, and the stated Catalan enumeration is due to Lewis \cite{LewisNote}.
	\end{proof}
	
	For $321$, the natural answer is expressed by RSK.  Let $K_{N,k}(a)$ be the number of ballot words of length $N$ with $a$ letters $D$ such that peaks $UD$ occur exactly at the boundary positions $k,2k,\ldots,km$.
	
	\begin{theorem}\label{thm:classical321}
		Let $N=mk+s$.  Then
		\[
		|\D_{N,k}(321)|=
		\sum_{a=0}^{\lfloor N/2\rfloor}K_{N,k}(a)\frac{N-2a+1}{N-a+1}\binom{N}{a}.
		\]
	\end{theorem}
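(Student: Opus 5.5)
The plan is to combine the Robinson--Schensted correspondence with Schensted's theorem and the standard descent-preservation property of the recording tableau. RSK gives a bijection $\pi\mapsto(P(\pi),Q(\pi))$ from $S_N$ to pairs of standard Young tableaux of the same shape $\lambda\vdash N$. By Schensted's theorem, the number of rows of $\lambda$ equals the length of the longest decreasing subsequence of $\pi$. Hence $\pi$ avoids $321$ if and only if $\lambda$ has at most two rows, that is, $\lambda=(N-a,a)$ with $0\le a\le\lfloor N/2\rfloor$.

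Next I would invoke the standard fact (see \cite{StanleyEC2}) that $\Des(\pi)=\Des(Q(\pi))$. Here $i$ is a descent of a tableau $Q$ when $i+1$ lies in a strictly lower row than $i$. Thus $\pi\in\D_{N,k}(321)$ exactly when $Q(\pi)$ is a standard tableau of shape $(N-a,a)$ with descent set $\{k,2k,\ldots,km\}$, and $P(\pi)$ is an arbitrary standard tableau of the same shape. The count therefore factors shape by shape:
\[
|\D_{N,k}(321)|=\sum_{a}\#\{Q:\ \operatorname{sh}Q=(N-a,a),\ \Des Q=\{k,\ldots,km\}\}\cdot f^{(N-a,a)} .
\]

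For the second factor, the hook length formula (or the ballot/reflection argument) gives
\[
f^{(N-a,a)}=\binom{N}{a}-\binom{N}{a-1}=\frac{N-2a+1}{N-a+1}\binom{N}{a}.
\]
For the first factor, encode a two-row tableau $Q$ as the word $w=w_1\cdots w_N$ with $w_j=U$ if $j$ is in the first row and $w_j=D$ if $j$ is in the second row. This is the usual bijection between two-row standard tableaux of shape $(N-a,a)$ and ballot words with $a$ letters $D$. In a two-row tableau, $i+1$ lies strictly lower than $i$ precisely when $i$ is in row one and $i+1$ is in row two, that is, when $w_iw_{i+1}=UD$. So $\Des Q=\{k,\ldots,km\}$ means the peaks $UD$ occur exactly at positions $k,2k,\ldots,km$. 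The number of such $Q$ is $K_{N,k}(a)$, and summing over $a$ gives the formula.

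The only delicate step is the descent-preservation fact, specifically the convention that a descent $i$ of $\pi$ corresponds to $i+1$ appearing weakly left and strictly below $i$ in $Q$. I would cite it rather than reprove it. I would also check it on a small case such as $\pi=21$, where $Q$ has $1$ over $2$ in one column, so $w=UD$ and the descent is at position $1$, to make sure the peak convention in $K_{N,k}(a)$ matches.
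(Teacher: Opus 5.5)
Your proposal is correct and takes the same approach as the paper. Both use RSK with the two-row restriction for $321$-avoiders, the descent-preservation property $\Des(\pi)=\Des(Q(\pi))$ for the recording tableau, the ballot-word encoding in which tableau descents become $UD$ peaks (giving $K_{N,k}(a)$), and the factor $f^{(N-a,a)}=\binom{N}{a}-\binom{N}{a-1}$ for the unconstrained insertion tableau. You add detail the paper omits: the explicit appeal to Schensted's theorem, the derivation of $f^{(N-a,a)}$, and the check that ``$i+1$ strictly below $i$'' in a two-row tableau means $w_iw_{i+1}=UD$.
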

	
	\begin{proof}
		Under the Robinson-Schensted correspondence \cite{Schensted,KnuthRSK}, $321$-avoiding permutations correspond to pairs of standard Young tableaux of the same shape with at most two rows.  Descents of the permutation are descents of the recording tableau.  A two-row recording tableau is encoded by a ballot word, with a peak $UD$ at $i$ precisely when $i$ is a descent.  Thus the recording tableaux are counted by $K_{N,k}(a)$, and for each such recording tableau the insertion tableau can be chosen in
		\[
		f^{(N-a,a)}=\frac{N-2a+1}{N-a+1}\binom{N}{a}
		\]
		ways.  Summing over $a$ proves the formula.
	\end{proof}
	
	\begin{corollary}\label{cor:classicalgf}
		Let \(T(z)=1+zT(z)^k\).  Then
		\[
		\sum_{m\ge0}|\D_{mk+s,k}(132)|z^m
		=\sum_{m\ge0}|\D_{mk+s,k}(231)|z^m=T(z)^s,
		\]
		and
		\[
		\sum_{m\ge0}|\D_{mk+s,k}(213)|z^m
		=\sum_{m\ge0}|\D_{mk+s,k}(312)|z^m=\frac{T(z)-1}{z}.
		\]
		In particular, in the complete-block case \(N=(m+1)k\), the four nonmonotone classical classes
		\(132,231,213,312\) all have cardinality \(C^{(k)}_{m+1}\).
	\end{corollary}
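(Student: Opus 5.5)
The plan is to read off all three claims of Corollary~\ref{cor:classicalgf} from the two theorems already proved, together with the Lagrange inversion identity \eqref{eq:Raney}. We fix $s$ with $1\le s\le k$ and work with generating functions in $z$ indexed by $m$.

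For the first identity, Theorem~\ref{thm:classical132231} gives, for each $m\ge0$,
\[
|\D_{mk+s,k}(132)|=|\D_{mk+s,k}(231)|=|\F^{(k)}_{m,s}|.
\]
The generating function of $\F^{(k)}_{m,s}$ by $m$ is $T(z)^s$: an ordered forest of $s$ trees is an $s$-tuple of complete $k$-ary trees, and internal vertices add. Summing over $m$ therefore gives the first display.

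For the second identity, Theorem~\ref{thm:classical213312} shows that both $|\D_{mk+s,k}(213)|$ and $|\D_{mk+s,k}(312)|$ equal the number $t_{m+1}$ of complete ordered $k$-ary trees with $m+1$ internal vertices. This count is independent of $s$. Since $T(z)=\sum_{j\ge0}t_jz^j$ with $t_0=1$, we get
\[
\sum_{m\ge0}t_{m+1}z^m=\frac{T(z)-1}{z},
\]
which is the second display. One could also write this as $T(z)^k$ using the functional equation.

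For the complete-block statement, we take $s=k$, so $N=(m+1)k$. The Raney number then becomes
\[
\frac{k}{(m+1)k}\binom{(m+1)k}{m}=\frac{1}{m+1}\binom{(m+1)k}{m}.
\]
We must check that this equals $C^{(k)}_{m+1}$ as defined in \eqref{eq:Fuss}. This is the standard identity
\[
\frac{1}{(k-1)(m+1)+1}\binom{k(m+1)}{m+1}=\frac{1}{m+1}\binom{k(m+1)}{m},
\]
which follows from $\binom{n}{m+1}=\binom{n}{m}\frac{n-m}{m+1}$ with $n=k(m+1)$, since $n-m=(k-1)(m+1)+1$. Alternatively, it follows combinatorially: $T^k=(T-1)/z$ says that a forest of $k$ trees is exactly the child list of a root. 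With this identity, the $132$ and $231$ counts agree with the $213$ and $312$ counts.

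There is no real obstacle here. The only points needing care are that the $213$/$312$ generating function is independent of $s$, and the binomial identity in the case $s=k$.
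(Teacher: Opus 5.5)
Your proposal is correct and follows the paper's proof: the first identity comes from Theorem~\ref{thm:classical132231} with forests counted by $T(z)^s$, the second from Theorem~\ref{thm:classical213312} via $[z^{m+1}]T(z)$, and the complete-block claim from specializing $s=k$ so that the Raney number becomes the Fuss--Catalan number. Your explicit binomial check of that last identity simply writes out a step the paper states without computation.
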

	
	\begin{proof}
		The first identity is just Theorem~\ref{thm:classical132231}.  The second follows from
		Theorem~\ref{thm:classical213312} and the identity
		\([z^{m+1}]T(z)=C^{(k)}_{m+1}\), equivalently \(T(z)-1=zT(z)^k\).  The complete-block assertion is the specialization $s=k$, for which the Raney number in \eqref{eq:Raney} becomes the Fuss-Catalan number in \eqref{eq:Fuss}.
	\end{proof}
	
	\section{Fully consecutive patterns}
	
	A fully consecutive pattern is denoted by underlining all three letters.  Thus $\underline{312}$ means that three adjacent entries have relative order $312$.
	
	We shall use twice the following elementary count.  Start with a chain
	\(c_1<c_2<\cdots<c_L\) and add incomparable elements \(u_1,\ldots,u_m\).  If the only additional restrictions are \(u_i<c_{a_i}\), where
	\(1\le a_1\le\cdots\le a_m\), then the number of linear extensions is
	\[
	\prod_{i=1}^m(a_i+i-1).
	\]
	Indeed, insert the elements \(u_1,u_2,\ldots,u_m\) successively into the chain; when inserting \(u_i\), it must be placed before \(c_{a_i}\), and the previous \(i-1\) inserted elements are already among the available earlier positions.  Dually, if the only restrictions are \(c_{b_i}<u_i\), the same argument applied from the top of the chain gives the analogous product of the numbers of available positions above \(c_{b_i}\).
	
	\begin{theorem}\label{thm:fullyconsecutive}
		Let $N=mk+s$, $1\le s\le k$.  The fully consecutive patterns of length three in $\D_{N,k}$ are enumerated as follows.
		\[
		\begin{array}{c|c}
			\toprule
			\text{pattern} & \text{number of avoiding permutations}\\
			\midrule
			\underline{123} & 0\text{ if }k\ge3\text{ and }N\ge3;\ \text{all of }\D_{N,k}\text{ if }N<3\text{ or }k=2\\[2mm]
			\underline{321} & |\D_{N,k}|\\[1mm]
			\underline{312} & k^m m!\\[1mm]
			\underline{231} & \displaystyle\prod_{j=0}^{m-1}(s+jk)\\[3mm]
			\underline{132} & \LE(P^{132}_{m,s,k})\\[1mm]
			\underline{213} & \LE(P^{213}_{m,s,k})\\
			\bottomrule
		\end{array}
		\]
		Here $\LE(P)$ denotes the number of linear extensions of the poset $P$.  The posets $P^{132}_{m,s,k}$ and $P^{213}_{m,s,k}$ are obtained from the block poset \(B_{m,s,k}\) by adding, respectively, the relations
		\[
		x_{i+1,1}<x_{i,k-1}\qquad (1\le i\le m),
		\]
		whenever $x_{i,k-1}$ exists, and
		\[
		x_{i+1,2}<x_{i,k}\qquad (1\le i\le m),
		\]
		whenever $x_{i+1,2}$ exists.
	\end{theorem}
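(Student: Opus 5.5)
The plan is to classify every window of three adjacent positions in a permutation $\pi\in\D_{N,k}$ by where it sits relative to the block boundaries. Since $\Des(\pi)=\{k,2k,\ldots,mk\}$ and $k\ge2$, no two descents are adjacent. So a window is of exactly one of three types.
\begin{enumerate}[label=(\roman*)]
\item It lies inside one block. This requires $k\ge3$, and the window then has pattern $\underline{123}$.
\item It is an up–down window $(x_{i,k-1},x_{i,k},x_{i+1,1})$. Its pattern is $\underline{132}$ or $\underline{231}$.
\item It is a down–up window $(x_{i,k},x_{i+1,1},x_{i+1,2})$, which exists only when $x_{i+1,2}$ does. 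Its pattern is $\underline{213}$ or $\underline{312}$.
\end{enumerate}
This gives the first two rows immediately. The pattern $\underline{321}$ never occurs. The pattern $\underline{123}$ occurs iff $k\ge3$ and $N\ge3$, because then the first block contains three adjacent increasing entries; for $k=2$ every window is of type (ii) or (iii).

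For the remaining four patterns, avoidance is equivalent to imposing, at each boundary, the opposite comparison on the relevant window. By Lemma~\ref{lem:blockposet}, the avoiders are then the linear extensions of $B_{m,s,k}$ with these relations added.
\begin{itemize}
\item Avoiding $\underline{132}$ forbids $x_{i,k-1}<x_{i+1,1}<x_{i,k}$, which means adding $x_{i+1,1}<x_{i,k-1}$.
\item Avoiding $\underline{213}$ forbids $x_{i+1,1}<x_{i,k}<x_{i+1,2}$, which means adding $x_{i+1,2}<x_{i,k}$ whenever $x_{i+1,2}$ exists.
\end{itemize}
These are exactly $P^{132}_{m,s,k}$ and $P^{213}_{m,s,k}$, so those two rows are definitional once the window classification is in place.

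\textbf{The pattern $\underline{312}$.} Avoidance adds $x_{i,k}<x_{i+1,2}$ whenever $x_{i+1,2}$ exists. All elements other than $x_{2,1},\ldots,x_{m+1,1}$ then form a single chain:
\[
x_{1,1}<\cdots<x_{1,k}<x_{2,2}<\cdots<x_{2,k}<x_{3,2}<\cdots .
\]
The only remaining constraint on $u_i=x_{i+1,1}$ is $u_i<x_{i,k}$. The relation $u_i<x_{i+1,2}$ is implied by this one, and it also holds vacuously when $s=1$ and $i=m$.

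The element $x_{i,k}$ is the chain element $c_{a_i}$ with $a_i=k+(i-1)(k-1)$. The indices $a_i$ are nondecreasing, so the chain-insertion count stated before the theorem applies. It gives
\[
\prod_{i=1}^m (a_i+i-1)=\prod_{i=1}^m ik=k^m m!,
\]
and this is independent of $s$.

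\textbf{The pattern $\underline{231}$.} Avoidance adds $x_{i,k-1}<x_{i+1,1}$. Now every element except the block maxima $x_{1,k},\ldots,x_{m,k}$ lies on a chain
\[
x_{1,1}<\cdots<x_{1,k-1}<x_{2,1}<\cdots<x_{2,k-1}<\cdots<x_{m+1,1}<\cdots<x_{m+1,s}.
\]
This chain has length $m(k-1)+s$. Each $u_i=x_{i,k}$ is constrained only by $u_i>x_{i+1,1}$, since $u_i>x_{i,k-1}$ is implied.

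Here I will use the dual (top-down) form of the insertion count, inserting $u_m,u_{m-1},\ldots,u_1$ in that order.
\begin{itemize}
\item Above $x_{m+1,1}$ there are $s-1$ chain elements, so $u_m$ has $s$ available slots.
\item Above $x_{m,1}$ there are $(k-1)+(s-1)+1=s+k-1$ elements once $u_m$ is included, so $u_{m-1}$ has $s+k$ slots.
\item In general, $u_{m-j}$ has $s+jk$ slots.
\end{itemize}
The product of these is $\prod_{j=0}^{m-1}(s+jk)$.

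The main care point is this last count, together with the boundary cases. I need to check that the added relations really make the stated elements a chain. I also need to confirm that the insertion lemma's monotonicity hypothesis holds in the dual direction, and to verify that degenerate last blocks do not change the formulas: $s=1$ for $\underline{312}$, and $k=2$, where $x_{i,k-1}=x_{i,1}$, for both cases.
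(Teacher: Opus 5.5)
Your proposal is correct and follows the paper's proof essentially step for step. You use the same classification of three-entry windows by block boundaries, the same reduction of $\underline{132}$ and $\underline{213}$ to the added poset relations, and the same chain-insertion counts: bottom-up for $\underline{312}$, with $x_{i,k}$ at chain position $i(k-1)+1$, and top-down from the right for $\underline{231}$, giving $s,s+k,\ldots,s+(m-1)k$. The checks you list at the end are already settled by your explicit slot counts, which remain valid when $s=1$ and when $k=2$.
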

	
	\begin{proof}
		The cases $\underline{123}$ and $\underline{321}$ are immediate from the block structure: if $N\ge3$ and the first block has length at least three, then its first three entries form a consecutive $123$; when $k=2$ every consecutive triple contains exactly one ascent and one descent.  Also, the descent set has no two consecutive descents, so a consecutive $321$ is impossible.
		
		For $\underline{312}$, a consecutive occurrence can only straddle a boundary and has the form
		\[
		x_{i,k},\ x_{i+1,1},\ x_{i+1,2}.
		\]
		It is a $312$ precisely when $x_{i,k}>x_{i+1,2}$.  Hence avoidance is equivalent to the single chain
		\[
		x_{1,1}<\cdots<x_{1,k}<x_{2,2}<\cdots<x_{2,k}<\cdots <x_{m+1,2}<\cdots<x_{m+1,s},
		\]
		together with the $m$ additional elements $x_{2,1},\ldots,x_{m+1,1}$, where $x_{i+1,1}$ is required to be below the chain element $x_{i,k}$.  In this chain, the element $x_{i,k}$ is in position $i(k-1)+1$.  The chain-insertion count above therefore gives
		\[
		\prod_{i=1}^m\bigl(i(k-1)+1+i-1\bigr)=\prod_{i=1}^m ik=k^m m!.
		\]
		
		For $\underline{231}$, a consecutive occurrence at the boundary between blocks $i$ and $i+1$ has the form
		\[
		x_{i,k-1},\ x_{i,k},\ x_{i+1,1}
		\]
		and is a $231$ precisely when $x_{i,k-1}>x_{i+1,1}$.  Avoidance is therefore equivalent to the chain obtained by deleting the last entry of each complete block, together with each deleted entry $x_{i,k}$ placed above the chain element $x_{i+1,1}$.  Inserting these deleted entries from right to left gives respectively
		\[
		s, s+k,\ldots, s+(m-1)k
		\]
		available positions, and hence the product $\prod_{j=0}^{m-1}(s+jk)$.
		
		Finally, a consecutive triple not contained in a single block can only straddle a boundary.  The pattern $\underline{132}$ occurs at the boundary between blocks $i$ and $i+1$ exactly when
		\[
		x_{i,k-1}<x_{i+1,1}<x_{i,k}.
		\]
		Since $x_{i+1,1}<x_{i,k}$ is already the boundary descent, avoiding $\underline{132}$ is equivalent to imposing $x_{i+1,1}<x_{i,k-1}$.  Similarly, $\underline{213}$ occurs at a boundary exactly when $x_{i,k}<x_{i+1,2}$, so avoiding it is equivalent to imposing $x_{i+1,2}<x_{i,k}$.  Thus the avoiding permutations are precisely the linear extensions of the two displayed posets.
	\end{proof}
	
	\section{Vincular patterns with exactly one adjacency}
	
	We now classify the twelve vincular patterns of length three with exactly one adjacency.  For compactness, write
	\[
	R^{(k)}_{m,s}=\frac{s}{mk+s}\binom{mk+s}{m},\qquad
	F^{(k)}_m=C^{(k)}_{m+1},
	\]
	\[
	P^{(k)}_{m,s}=\prod_{j=0}^{m-1}(s+jk),\qquad
	Q^{(k)}_{m,s}=\prod_{j=1}^{m}\binom{s+jk-1}{k-1},
	\]
	\[
	U^{(k)}_{m,s}=\frac1{m!}\prod_{j=1}^{m}\binom{s+jk-1}{k}.
	\]
	Empty products are interpreted as $1$.
	
	We shall use the following elementary insertion counts.  In each case the suffix is already standardized and belongs to the same avoiding class; after the new block is inserted, all values are standardized again.
	
	\begin{lemma}\label{lem:insertioncounts}
		Let \(L\) be the length of a standardized suffix.
		\begin{enumerate}[label=(\alph*)]
			\item If a new complete block of length \(k\) is inserted immediately to the left and its last entry must be larger than every entry in the suffix, then there are \(\binom{L+k-1}{k-1}\) choices.
			\item Suppose the suffix begins with its minimum.  If a new complete block of length \(k\) is inserted immediately to the left and the first \(k-1\) entries of the new block must be smaller than every entry in the suffix, then there are \(L\) choices.
		\end{enumerate}
	\end{lemma}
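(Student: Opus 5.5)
Both parts amount to counting the ways to choose the value set of the new block relative to the standardized suffix. Before standardization, the enlarged permutation has \(L+k\) values. The suffix keeps its internal relative order, and the new block is increasing, so its entries are determined by their set of values. A choice is therefore the same as choosing which \(k\) of the values \(1,\dots,L+k\) go to the new block; the remaining \(L\) values are assigned to the suffix in its given relative order. We only need to count the admissible \(k\)-subsets. We do not check membership in \(\D\) or in the avoiding class here, since the statement asks only for the number of choices under the given constraint; if the boundary descent has to be checked, it follows from the constraint in each case.

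\emph{(a)} Here the last entry of the new block must exceed every suffix entry. Since the block is increasing, its largest entry is its last entry, so the constraint says exactly that the maximum value \(L+k\) belongs to the new block.
\begin{itemize}
\item The other \(k-1\) block values form an arbitrary subset of \(\{1,\dots,L+k-1\}\).
\item This gives \(\binom{L+k-1}{k-1}\) choices.
\item The boundary descent holds automatically, since \(x_{\text{new},k}=L+k\) exceeds the first suffix entry.
\end{itemize}

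\emph{(b)} Here the first \(k-1\) entries of the new block must lie below every suffix entry.
\begin{itemize}
\item The \(k-1\) smallest block entries are smaller than all \(L\) suffix values, and they are the smallest block values. So they occupy exactly \(\{1,\dots,k-1\}\).
\item The last block entry \(y\) is then any of the remaining values \(k,\dots,L+k\), with the suffix taking the rest.
\item The boundary descent requires \(y\) to exceed the first suffix entry. The suffix begins with its minimum, and that minimum is the smallest value not used by the block.
\item If \(y=k\), the suffix minimum is \(k+1>y\), so there is no descent. If \(y>k\), the suffix minimum is \(k<y\), so the descent holds.
\item This leaves \(L+k-(k-1)-1=L\) choices, namely \(y\in\{k+1,\dots,L+k\}\).
\end{itemize}

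The main point needing care is this reading of (b). The count \(L\) matches only if the boundary descent is imposed, which excludes \(y=k\); without it there would be \(L+1\) choices. The hypothesis that the suffix begins with its minimum is exactly what turns the descent into the explicit exclusion of \(y=k\). In (a) the descent comes for free.
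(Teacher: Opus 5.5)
Your proof is correct and follows the paper's argument essentially step for step: in (a) the last entry is forced to be the maximum and the other $k-1$ ranks are free, and in (b) the first $k-1$ entries are forced to be the smallest ranks, while the boundary descent (using that the suffix begins with its minimum) rules out exactly one of the $L+1$ remaining values for the last entry. One correction: your opening claim that the boundary descent ``follows from the constraint in each case'' is false for (b). There the descent is an extra condition, and your own treatment of (b) shows it is what cuts $L+1$ down to $L$, so that sentence should be removed.
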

	
	\begin{proof}
		For (a), the last entry of the inserted block is forced to be the maximum of the enlarged suffix.  The other \(k-1\) entries of the inserted block may be chosen arbitrarily from the remaining \(L+k-1\) ranks, and are then written increasingly.
		
		For (b), the first \(k-1\) entries of the inserted block must be the \(k-1\) smallest ranks in the enlarged word.  Since the suffix begins with its minimum, the boundary descent holds precisely when the last entry of the inserted block is not the smallest of the remaining \(L+1\) ranks.  This gives \(L\) choices.
	\end{proof}
	
	\begin{lemma}\label{lem:recordlowcount}
		The number of permutations in $\D_{mk+s,k}$ for which every lower boundary entry
		\[
		x_{2,1},x_{3,1},\ldots,x_{m+1,1}
		\]
		is smaller than all entries to its left is
		\[
		\frac1{m!}\prod_{j=1}^{m}\binom{s+jk-1}{k}.
		\]
	\end{lemma}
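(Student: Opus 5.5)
The plan is to count these permutations by peeling off blocks from the right and then simplifying the product. First I reformulate the condition. Entries increase inside each block, so $x_{i,1}$ is the minimum of $B_i$. Hence the hypothesis that every $x_{i+1,1}$ is smaller than everything to its left says exactly this: for each $i\ge1$, the first entry $x_{i,1}$ is the minimum of the prefix $B_1\cdots B_i$. The first block satisfies this trivially. Under this condition the boundary descents $x_{i,k}>x_{i+1,1}$ hold automatically, because $x_{i+1,1}$ is smaller than every earlier entry. So the permutations to be counted are the arrangements of $[N]$ into increasing blocks of sizes $k,\ldots,k,s$ in which each block's first entry is a prefix minimum.

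Next I derive a product formula by removing the last block. The condition for $i=m+1$ forces $x_{m+1,1}=1$, the global minimum. The other $s-1$ entries of $B_{m+1}$ form an arbitrary subset of $\{2,\ldots,N\}$, giving $\binom{N-1}{s-1}$ choices. Whether the prefix conditions hold for $B_1\cdots B_m$ depends only on the relative order of its entries. So after standardization, $B_1\cdots B_m$ is an arbitrary object of the same kind with $m-1$ complete blocks followed by a last block of length $k$; that is, it lies in the class for $(m-1)k+k$. Iterating, the block $B_j$ (for $j\le m$) contains the minimum of a prefix of length $jk$, and its remaining $k-1$ entries are chosen freely, giving $\binom{jk-1}{k-1}$ choices. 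The count is therefore
\[
\binom{mk+s-1}{s-1}\prod_{j=1}^{m}\binom{jk-1}{k-1}.
\]
To make this rigorous, I would state the bijection (choice of the value set of the last block, together with the standardized prefix) and induct on $m$. The base case $m=0$ gives $1$.

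The remaining step, which is the only real computation, is to identify this with $\frac1{m!}\prod_{j=1}^m\binom{s+jk-1}{k}$. I would write
\[
\binom{jk-1}{k-1}=\frac{(jk-1)!}{(k-1)!\,((j-1)k)!}.
\]
The product $\prod_{j=1}^m (jk-1)!/((j-1)k)!$ is the product of all integers in $[1,mk]$ except the multiples of $k$, so it equals $(mk)!/(k^m m!)$. Combining this with $\binom{mk+s-1}{s-1}=(mk+s-1)!/((s-1)!\,(mk)!)$, the left side becomes
\[
\frac{(mk+s-1)!}{(s-1)!\,(k!)^m\,m!}.
\]
On the other side, $\prod_{j=1}^m\binom{s+jk-1}{k}$ telescopes to $(s+mk-1)!/((s-1)!\,(k!)^m)$, since consecutive factorials cancel. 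Dividing by $m!$ gives the same expression.

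I expect the main obstacle to be only bookkeeping. The recursion must treat the last block uniformly when $s=k$ and when $s<k$, and the telescoping must be checked carefully. As sanity checks, the cases $(k,m,s)=(2,1,1),(2,1,2),(2,2,1)$ should give $1,3,3$, and both formulas do.
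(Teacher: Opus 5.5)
Your proof is correct, but it takes a different route from the paper.

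\textbf{The paper's route.} It adds the relations $x_{i+1,1}<y$, for every earlier position $y$, to the block poset. The result is a rooted tree poset: its root is $x_{m+1,1}$, it has a spine $x_{m+1,1}<x_{m,1}<\cdots<x_{1,1}$, and chains hang off the spine. The paper then applies the hook-length formula for trees. The hook product $N\,(s-1)!\,(k-1)!^m\prod_{i=1}^m ik$ gives $\frac{(mk+s-1)!}{(s-1)!(k!)^m m!}$ in one step.

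\textbf{Your route.} You peel off the last block. The entry $x_{m+1,1}=1$ is forced, the other $s-1$ values are free, and the standardized prefix is an object of the same type with all blocks complete. This gives $\binom{mk+s-1}{s-1}\prod_{j=1}^m\binom{jk-1}{k-1}$. You then need an extra computation, removing the multiples of $k$ from $(mk)!$, to reach the closed form.

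\textbf{Comparison.} Your recursion is exactly the standard inductive proof of the tree hook-length formula, specialized to this tree: delete the root, then interleave its two subtrees of sizes $s-1$ and $mk$. So the two proofs rest on the same structure. The paper's version is shorter, given the cited formula, and it presents the answer directly as $N!$ over a hook product. Yours is self-contained. Your observation that the boundary descents become automatic makes the bijection fully transparent.

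\textbf{A small bonus of your form.} For $s=k$, your intermediate product equals $\prod_{j=1}^{m}\binom{(j+1)k-1}{k-1}=Q^{(k)}_{m,k}$. This makes visible the equality $U^{(k)}_{m,k}=Q^{(k)}_{m,k}$ that reverse-complement predicts, since $1\underline{32}$ and $\underline{21}3$ are reverse-complement partners.
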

	
	\begin{proof}
		Let $R_{m,s,k}$ be the poset obtained from $B_{m,s,k}$ by adding
		\[
		x_{i+1,1}<y\qquad (1\le i\le m)
		\]
		for every position $y$ to the left of $x_{i+1,1}$.  Its linear extensions are exactly the permutations with the stated record-low property.  We count these extensions by the hook-length formula for rooted tree posets.
		
		Orient the Hasse diagram from smaller to larger.  The minimal element is $x_{m+1,1}$.  The first entries of the blocks form the spine
		\[
		x_{m+1,1}<x_{m,1}<\cdots <x_{2,1}<x_{1,1},
		\]
		and, from each $x_{i,1}$, there is an attached chain
		\[
		x_{i,1}<x_{i,2}<\cdots <x_{i,k}\qquad(1\le i\le m),
		\]
		while the root has the final attached chain
		\[
		x_{m+1,1}<x_{m+1,2}<\cdots <x_{m+1,s}.
		\]
		Thus $R_{m,s,k}$ is a rooted tree poset.  The subtree sizes of the spine vertices $x_{i,1}$ for $1\le i\le m$ are $ik$, and the subtree size of the root $x_{m+1,1}$ is $N=mk+s$.  The non-spine vertices in each complete block contribute
		\[
		(k-1)(k-2)\cdots 1=(k-1)!,
		\]
		and the non-root vertices in the final block contribute $(s-1)!$.  Hence the hook product is
		\[
		N\,(s-1)!\,(k-1)!^m\prod_{i=1}^m ik
		=N\,(s-1)!\,(k!)^m m!.
		\]
		The rooted-tree hook-length formula\cite{FrameRobinsonThrall}  therefore gives
		\[
		\LE(R_{m,s,k})=
		\frac{N!}{N(s-1)!(k!)^m m!}
		=\frac{(mk+s-1)!}{(s-1)!(k!)^m m!}
		=\frac1{m!}\prod_{j=1}^m\binom{s+jk-1}{k},
		\]
		as required.
	\end{proof}
	
	\begin{lemma}\label{lem:tightening}
		For every $N$ and $k$, avoidance of the classical pattern $132$ in $\D_{N,k}$ is equivalent to avoidance of the vincular pattern $\underline{13}2$.  Similarly, avoidance of the classical pattern $231$ is equivalent to avoidance of $2\underline{31}$.
	\end{lemma}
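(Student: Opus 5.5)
One direction is immediate for both equivalences: an occurrence of $\underline{13}2$ is in particular an occurrence of the classical pattern $132$, and likewise for $2\underline{31}$ and $231$. So avoiding $132$ implies avoiding $\underline{13}2$, and avoiding $231$ implies avoiding $2\underline{31}$. The substance is the converse. I will prove it by a standard tightening argument: any classical occurrence can be shrunk to one in which the required pair of entries sits in adjacent positions. The argument uses only the values of $\pi$ and does not need the block structure of $\D_{N,k}$. It therefore holds for every $N$ and $k$, as the statement asserts.

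For $132$, suppose $\pi$ contains an occurrence $\pi_a,\pi_b,\pi_c$ with $a<b<c$ and $\pi_a<\pi_c<\pi_b$. Among all such occurrences with the fixed third entry $\pi_c$, choose one that minimizes the gap $b-a$. Suppose $b>a+1$ and look at $y=\pi_{a+1}$.
\begin{enumerate}[label=(\roman*)]
\item If $y<\pi_c$, then $y,\pi_b,\pi_c$ is a $132$ occurrence with a smaller gap.
\item If $y>\pi_c$, then $\pi_a,y,\pi_c$ is a $132$ occurrence with gap $1$.
\end{enumerate}
Since $y\ne\pi_c$, one of these cases always applies, so each contradicts the minimality of $b-a$. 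Hence $b=a+1$, and $\pi_a\pi_{a+1}\pi_c$ is an occurrence of $\underline{13}2$.

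For $231$, take an occurrence $\pi_a,\pi_b,\pi_c$ with $a<b<c$ and $\pi_c<\pi_a<\pi_b$. Fix $\pi_a$ and minimize $c-b$. Suppose $c>b+1$ and look at $y=\pi_{b+1}$.
\begin{enumerate}[label=(\roman*)]
\item If $y<\pi_a$, then $\pi_a,\pi_b,y$ is an occurrence with adjacent last two entries.
\item If $y>\pi_a$, then $\pi_a,y,\pi_c$ is an occurrence with a smaller gap.
\end{enumerate}
Either case gives a contradiction, so $c=b+1$. Alternatively, the $231$ statement follows from the $132$ one by applying reversal, which turns $\underline{13}2$ into $2\underline{31}$. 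Reversal does not preserve $\D_{N,k}$, but that does not matter here, because the tightening claim holds for all permutations.

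The only point requiring care is choosing the right quantity to minimize, so that each case strictly decreases it or directly produces an adjacent occurrence. A discrete descent argument on the gap handles this cleanly, and I expect no real obstacle.
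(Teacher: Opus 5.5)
Your proof is correct and is essentially the paper's tightening argument. The paper scans the positions from $a$ to $b$ (resp.\ from $b$ to $c$) for the first adjacent step at which the values cross $\pi_c$ (resp.\ $\pi_a$), whereas you reach the same adjacent straddling pair by minimizing the gap; your reversal alternative for $231$ is also valid. You are also right that the block structure of $\D_{N,k}$ is not needed. The paper's remark that the crossing must be an ascent inside a block (resp.\ a boundary descent) holds automatically for any upward (resp.\ downward) crossing, so the equivalence holds in all of $S_N$.
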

	
	\begin{proof}
		The vincular occurrence is, of course, a classical occurrence, so only the converse needs proof.  Suppose first that $\pi\in\D_{N,k}$ contains a classical $132$, say $\pi_a<\pi_c<\pi_b$ with $a<b<c$.  Along the positions from $a$ to $b$, the values start below $\pi_c$ and end above $\pi_c$.  The first adjacent step along this interval at which the values cross from below $\pi_c$ to above $\pi_c$ must be an ascent inside a block, because boundary steps in $\D_{N,k}$ are descents.  Thus there is a position $t$ with $\pi_t<\pi_c<\pi_{t+1}$, and $\pi_t,\pi_{t+1},\pi_c$ is an occurrence of $\underline{13}2$.
		
		For $231$, suppose $\pi_a,\pi_b,\pi_c$ is a classical occurrence, so $a<b<c$ and $\pi_c<\pi_a<\pi_b$.  Along the positions from $b$ to $c$, the values start above $\pi_a$ and end below $\pi_a$.  The first crossing from above to below $\pi_a$ must be a boundary descent, since all steps inside blocks are ascents.  Hence for some $t$ we have $\pi_t>\pi_a>\pi_{t+1}$, and $\pi_a,\pi_t,\pi_{t+1}$ is an occurrence of $2\underline{31}$.
	\end{proof}
	
	\begin{lemma}\label{lem:oneadjacentfuss}
		In the complete-block case $N=nk$, the four one-adjacent classes
		\[
		\underline{13}2,\qquad 2\underline{31},\qquad \underline{31}2,\qquad 2\underline{13}
		\]
		are all counted by $C^{(k)}_n$.
	\end{lemma}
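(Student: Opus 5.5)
The plan is to reduce all four classes to the classical $132$- and $231$-avoiding classes. The tools are the tightening Lemma~\ref{lem:tightening} and the reverse-complement symmetry of Lemma~\ref{lem:rc}, which applies because we are in the complete-block case $N=nk$. The count is then read off from Theorem~\ref{thm:classical132231}, or equivalently from Corollary~\ref{cor:classicalgf}.

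First, the classes $\underline{13}2$ and $2\underline{31}$. By Lemma~\ref{lem:tightening}, we have $\D_{N,k}(\underline{13}2)=\D_{N,k}(132)$ and $\D_{N,k}(2\underline{31})=\D_{N,k}(231)$ for every $N$. With $N=nk$ we write $N=mk+s$ with $m=n-1$ and $s=k$. Theorem~\ref{thm:classical132231} then gives the Raney number $R^{(k)}_{n-1,k}=\frac{k}{nk}\binom{nk}{n-1}$. A routine binomial manipulation, or the identity $T(z)^k=(T(z)-1)/z$ noted in Corollary~\ref{cor:classicalgf}, shows that this equals $C^{(k)}_n$.

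Second, the classes $\underline{31}2$ and $2\underline{13}$, which we transport by $\operatorname{rc}$. We compute the reverse-complement patterns with their adjacencies. Reversing $231$ gives $132$, and complementing gives $312$; the adjacency on the last two letters moves to the first two, so $\operatorname{rc}(2\underline{31})=\underline{31}2$. Likewise, reversing $132$ gives $231$, and complementing gives $213$, with the adjacency moving from the first two letters to the last two, so $\operatorname{rc}(\underline{13}2)=2\underline{13}$. Since $N=nk$, Lemma~\ref{lem:rc} says $\operatorname{rc}$ is an involution of $\D_{N,k}$ carrying $p$-avoiders to $\operatorname{rc}(p)$-avoiders. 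Hence $|\D_{N,k}(\underline{31}2)|=|\D_{N,k}(2\underline{31})|$ and $|\D_{N,k}(2\underline{13})|=|\D_{N,k}(\underline{13}2)|$, and both equal $C^{(k)}_n$ by the first step.

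The only step needing care is the bookkeeping of which side the adjacency lands on under $\operatorname{rc}$. The proof of Lemma~\ref{lem:rc} supplies this: reversing the three chosen positions turns an adjacency between the first two chosen entries into one between the last two, and vice versa. I expect no real obstacle beyond checking these two pattern computations and the Raney-to-Fuss-Catalan specialization at $s=k$. The hypothesis $N=nk$ is genuinely needed, since otherwise $\operatorname{rc}$ does not preserve $\D_{N,k}$.
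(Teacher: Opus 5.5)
Your proposal is correct and follows the paper's proof. It uses Lemma~\ref{lem:tightening} to identify $\underline{13}2$ and $2\underline{31}$ with the classical $132$ and $231$ classes, counted by $R^{(k)}_{n-1,k}=C^{(k)}_n$, and then the complete-block reverse-complement of Lemma~\ref{lem:rc} to transport these counts to $2\underline{13}$ and $\underline{31}2$. Your explicit checks of the pattern images under $\operatorname{rc}$ (including which side the adjacency lands on) and of the identity $\frac{1}{n}\binom{nk}{n-1}=\frac{1}{(k-1)n+1}\binom{nk}{n}$ fill in details the paper leaves implicit.
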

	
	\begin{proof}
		For $\underline{13}2$, Lemma~\ref{lem:tightening} reduces the enumeration to the classical $132$ case.  Thus the count is $R^{(k)}_{n-1,k}=C^{(k)}_n$.  The pattern $2\underline{31}$ is similarly equivalent to classical $231$, and so has the same count.  Applying the complete-block reverse-complement symmetry of Lemma~\ref{lem:rc} gives the two reverse-complement partners $\underline{31}2$ and $2\underline{13}$, with the same value.
	\end{proof}
	
	\begin{lemma}\label{lem:first312completion}
		Let $\pi\in\D_{mk+s,k}$ avoid $\underline{31}2$.  Then, whenever $s\ge2$, the entries $x_{m+1,2},\ldots,x_{m+1,s}$ are the largest $s-1$ entries of $\pi$.  Consequently, for $1\le s<k$, appending a new largest entry to the last block gives a bijection
		\[
		\D_{mk+s,k}(\underline{31}2)\longleftrightarrow \D_{mk+s+1,k}(\underline{31}2).
		\]
	\end{lemma}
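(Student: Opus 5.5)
The plan is to mirror the proof of Lemma~\ref{lem:312completion}, replacing the classical occurrence by an adjacent one. The key observation is that in the pattern $\underline{31}2$ the adjacent pair must be a descent, and in $\D_{N,k}$ the only descents occur at block boundaries. So an occurrence of $\underline{31}2$ has the form
\[
x_{i,k},\ x_{i+1,1},\ y,
\]
where $y$ lies to the right of $x_{i+1,1}$ and satisfies $x_{i+1,1}<y<x_{i,k}$. Thus avoidance means the following: for every boundary $i$, no later entry has its value strictly between $x_{i+1,1}$ and $x_{i,k}$.

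\textbf{Step 1: the last block consists of the top values.} Fix $2\le a\le s$ and suppose some entry $z$ outside the last block satisfies $z>x_{m+1,a}$. I would propagate along the boundaries, starting with the last one, $i=m$. Since $x_{m,k}>x_{m+1,1}$ and $x_{m+1,a}>x_{m+1,1}$ lies later, avoidance forces $x_{m+1,a}>x_{m,k}$. Hence $x_{m+1,a}$ exceeds every entry of block $m$.

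Next consider boundary $m-1$, where $x_{m-1,k}>x_{m,1}$. The entry $x_{m+1,a}$ lies to the right of $x_{m,1}$ and is larger than it, since it exceeds all of block $m$. So again $x_{m+1,a}>x_{m-1,k}$.

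Inducting downward in $i$, I get that $x_{m+1,a}$ exceeds every entry of every earlier block, which contradicts the existence of $z$. Inside the last block the entries $x_{m+1,a}$ with $a\ge2$ already exceed $x_{m+1,1}$. Therefore $x_{m+1,2},\ldots,x_{m+1,s}$ are larger than all other entries, i.e.\ they are the largest $s-1$ values.

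The main obstacle is this chain argument. A single boundary only compares $x_{m+1,a}$ with the preceding block, so one must check that the inductive hypothesis at boundary $i$ ("larger than the whole block $i+1$") really yields $x_{m+1,a}>x_{i+1,1}$. That is what makes the next avoidance constraint applicable.

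\textbf{Step 2: the bijection.} Given $\pi\in\D_{mk+s,k}(\underline{31}2)$ with $s<k$, append $N+1$ at the end of the last block. The descent set is unchanged, since the new step is an ascent and the last block still has length at most $k$. Now check that no occurrence is created:
\begin{itemize}
\item the new entry is the final entry, so it cannot be the first or second letter of an occurrence;
\item as the third letter it would have to be smaller than the first letter of the occurrence, which is impossible for the maximum.
\end{itemize}

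\textbf{Step 3: the inverse.} For $s+1\ge2$, Step 1 applied to the image shows that its final entry is the global maximum $mk+s+1$. Deleting it therefore returns a permutation in $\D_{mk+s,k}$. Deletion cannot create occurrences, so it preserves avoidance. The two maps are mutually inverse, which gives the bijection.
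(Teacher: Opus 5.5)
Your proof is correct and is essentially the paper's argument. Both rest on the fact that an occurrence of $\underline{31}2$ must have the form $x_{i,k},x_{i+1,1},y$ with $x_{i+1,1}<y<x_{i,k}$, and your appending/deletion part matches the paper's; the only difference is that the paper takes in one step the downward crossing of the level $x_{m+1,a}$ between a larger earlier entry and $x_{m+1,1}$, which must be a boundary descent, whereas your right-to-left induction over the boundaries spells out that same crossing argument block by block.
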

	
	\begin{proof}
		Suppose that some earlier entry is larger than $x_{m+1,a}$ for an $a\ge2$.  Moving from that earlier position to the position of $x_{m+1,a}$, the word eventually crosses from values larger than $x_{m+1,a}$ to values smaller than $x_{m+1,a}$, since $x_{m+1,1}<x_{m+1,a}$.  Such a downward crossing cannot occur inside a block, where the entries increase; hence it occurs at a boundary descent, say
		\[
		x_{i,k}>x_{m+1,a}>x_{i+1,1}.
		\]
		Then $x_{i,k},x_{i+1,1},x_{m+1,a}$ is an occurrence of $\underline{31}2$, a contradiction.  Thus no earlier entry is larger than any of $x_{m+1,2},\ldots,x_{m+1,s}$, and these entries are the largest $s-1$ values.
		
		Appending a new largest entry cannot create a new $\underline{31}2$ occurrence, because the new entry is in the last position and is larger than all old entries; it can be neither the adjacent lower element nor the later middle element of such an occurrence.  Deleting the final largest entry gives the inverse map.
	\end{proof}
	
	\begin{lemma}\label{lem:last213completion}
		Let $\pi\in\D_{mk+s,k}$ avoid $2\underline{13}$.  Then the entries of the final block form an interval of consecutive values.  Consequently, for $1\le s<k$, inserting after $x_{m+1,s}$ a new value immediately above $x_{m+1,s}$ in value order gives a bijection
		\[
		\D_{mk+s,k}(2\underline{13})\longleftrightarrow
		\D_{mk+s+1,k}(2\underline{13}).
		\]
	\end{lemma}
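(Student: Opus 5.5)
The plan mirrors Lemma~\ref{lem:213completion}, replacing the classical occurrence by a tightened vincular one; the single extra ingredient is a crossing argument of the kind used in Lemma~\ref{lem:first312completion}.

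\emph{Step 1: the final block is an interval.} Suppose the final block has a gap. Then for some $a<s$ there is an earlier entry $y$ with
\[
x_{m+1,a}<y<x_{m+1,a+1}.
\]
The triple $y,x_{m+1,a},x_{m+1,a+1}$ is already an occurrence of $2\underline{13}$, because $x_{m+1,a}$ and $x_{m+1,a+1}$ are adjacent positions. This is a contradiction. The case $s=1$ is vacuous.

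\emph{Step 2: the insertion creates no occurrence.} Let $b=x_{m+1,s}$, insert the new last entry $b+1$, and raise every old value greater than $b$ by one. Since the relative order of the old entries is unchanged, no old occurrence is created. The word stays in $\D_{mk+s+1,k}$, because the new entry extends the increasing last block and $s+1\le k$.

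A new occurrence must use the final entry $b+1$ as its largest element, the ``3''. The adjacent ``1'' is then forced to be $b$. So we would need an earlier entry $y$ with $b<y<b+1$ in the new values. After the shift, no old value lies strictly between $b$ and $b+1$, so this is impossible.

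The adjacency actually makes this step easier than in the classical Lemma~\ref{lem:213completion}: there the ``1'' could be any earlier entry, whereas here it must be $b$. So no crossing argument is needed at this point.

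\emph{Step 3: the inverse map.} Take $\pi\in\D_{mk+s+1,k}(2\underline{13})$ with $s+1\ge2$. By Step~1 its last two entries are $b,b+1$. Deleting the last entry and standardizing gives a word in $\D_{mk+s,k}$. It avoids $2\underline{13}$, since any occurrence in it, after un-standardizing, is an occurrence in $\pi$ among entries other than the last.

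The two maps are mutually inverse, because the inserted value is exactly determined by the old last entry. The only point needing care is that deletion keeps adjacencies. The deleted entry is last, so no two surviving positions that were adjacent become separated, and no new adjacency is created between old positions. This bookkeeping is the main, if mild, obstacle.
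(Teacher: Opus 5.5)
Your proof is correct and follows the paper's argument essentially verbatim. In Step~1, a gap in the final block directly gives the occurrence $y,x_{m+1,a},x_{m+1,a+1}$ of $2\underline{13}$, and in Step~2 a new occurrence would need a value strictly between $b$ and $b+1$. You are slightly more careful than the paper in Step~3: applying Step~1 to the longer permutation shows its last two entries are $b,b+1$, which is what makes deletion a genuine two-sided inverse. The crossing argument announced in your opening sentence is never used, so that remark can be dropped.
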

	
	\begin{proof}
		If two adjacent entries $x_{m+1,a}<x_{m+1,a+1}$ of the final block had an earlier value $y$ strictly between them, then $y,x_{m+1,a},x_{m+1,a+1}$ would be an occurrence of $2\underline{13}$.  Hence no value outside the final block lies between two adjacent final-block values, and the final block is an interval.
		
		Let $b=x_{m+1,s}$, and insert a new final entry of value $b+1$, increasing all old values larger than $b$ by one.  The old relative order is unchanged.  A new occurrence using the inserted entry must use it as the third entry of the adjacent pair, so the second entry would have to be the old final entry $b$.  But then the first entry would need to have value strictly between $b$ and $b+1$, which is impossible.  Deleting the inserted final entry is the inverse operation.
	\end{proof}
	
	\begin{theorem}\label{thm:firstadjacent}
		For vincular patterns in which the first two letters are adjacent, we have the following classification.
		\[
		\begin{array}{c|c}
			\toprule
			\text{pattern} & \text{number of avoiding permutations in }\D_{mk+s,k}\\
			\midrule
			\underline{12}3 & 0\text{ for }k\ge3,\ N\ge3;\ \text{special }k=2\text{ case below}\\[1mm]
			\underline{13}2 & R^{(k)}_{m,s}\\[1mm]
			\underline{21}3 & Q^{(k)}_{m,s}\\[1mm]
			\underline{23}1 & P^{(k)}_{m,s}\\[1mm]
			\underline{31}2 & F^{(k)}_m\\[1mm]
			\underline{32}1 & \LE(P^{321}_{m,s,k})\\
			\bottomrule
		\end{array}
		\]
		For $k=2$, the exceptional first row is
		\[
		|\D_{2m+1,2}(\underline{12}3)|=2^m m!,\qquad
		|\D_{2m+2,2}(\underline{12}3)|=\prod_{j=1}^{m}(2j+1).
		\]
		The poset $P^{321}_{m,s,k}$ is obtained from the block poset $B_{m,s,k}$ by adding
		\[
		x_{i+1,1}<x_{j,a}
		\]
		for every position $(j,a)$ lying strictly to the right of $x_{i+1,1}$.
	\end{theorem}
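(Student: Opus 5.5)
We prove the six rows one at a time. In each row we first turn avoidance of the vincular pattern into an explicit family of order conditions on the blocks. These conditions are local, because the adjacent pair in a one-adjacent pattern must be either an ascent inside a block or a boundary descent. We then count with an earlier lemma or with the block-insertion counts of Lemma~\ref{lem:insertioncounts}.

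\textbf{Rows reducible to earlier results.}
\begin{itemize}
\item For $\underline{13}2$, Lemma~\ref{lem:tightening} identifies the class with $\D_{N,k}(132)$, so Theorem~\ref{thm:classical132231} gives $R^{(k)}_{m,s}$.
\item For $\underline{31}2$, apply Lemma~\ref{lem:first312completion} repeatedly to complete the last block. This gives a bijection with $\D_{(m+1)k,k}(\underline{31}2)$, and Lemma~\ref{lem:oneadjacentfuss} counts that class by $C^{(k)}_{m+1}=F^{(k)}_m$.
\item For $\underline{12}3$ with $k\ge3$ and $N\ge3$, the first three entries $x_{1,1}<x_{1,2}<x_{1,3}$ form an occurrence, so the class is empty.
\item For $\underline{32}1$, the adjacent pair must be a boundary descent $x_{i,k}>x_{i+1,1}$. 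An occurrence is therefore such a descent followed later by an entry below $x_{i+1,1}$. Avoidance says exactly that each $x_{i+1,1}$ lies below every entry to its right, which is the defining relation of $P^{321}_{m,s,k}$. Lemma~\ref{lem:blockposet} then identifies the avoiders with $\LE(P^{321}_{m,s,k})$.
\end{itemize}

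\textbf{Rows counted by block insertion.} For $\underline{21}3$, an occurrence is a boundary descent $x_{i,k}>x_{i+1,1}$ followed by a later entry exceeding $x_{i,k}$. So avoidance means that each complete-block maximum $x_{i,k}$ exceeds every entry to its right. This condition is hereditary for suffixes consisting of whole blocks. Given such a suffix of length $L=s+(j-1)k$, prepend a new block whose last entry is the maximum; the boundary descent is then automatic. Lemma~\ref{lem:insertioncounts}(a) gives $\binom{s+jk-1}{k-1}$ choices, and multiplying over $j=1,\ldots,m$ yields $Q^{(k)}_{m,s}$.

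For $\underline{23}1$, the adjacent pair must be an ascent $x_{i,a}<x_{i,a+1}$, and an occurrence needs a later entry below $x_{i,a}$. Avoidance is therefore equivalent to: every entry of a block other than its last lies below all entries to its right. In particular each suffix of whole blocks begins with its minimum. This holds trivially when the last block has length one. Lemma~\ref{lem:insertioncounts}(b) then gives $L=s+(j-1)k$ choices for the $j$th prepended block, and the product is $P^{(k)}_{m,s}$.

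\textbf{The case $\underline{12}3$ with $k=2$.} Here each complete block $x_{i,1}<x_{i,2}$ is an ascent. Avoidance forces $x_{i,2}$ to exceed all later entries. When $s=2$ the final block's ascent also forces nothing afterward, since it is last. So the condition is the same as for $\underline{21}3$ restricted to $k=2$, but with the last block treated specially. I would count it by prepending blocks, using Lemma~\ref{lem:insertioncounts}(a) with $k=2$, which gives $L+1$ choices for each new block.
\begin{itemize}
\item For $s=1$, the lengths are $L=1,3,5,\ldots$, giving $2\cdot4\cdots(2m)=2^m m!$.
\item For $s=2$ ($N=2m+2$), the lengths are $L=2,4,\ldots$, giving $\prod_{j=1}^{m}(2j+1)$.
\end{itemize}
Both agree with the stated formulas.

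\textbf{Main obstacle.} The step needing most care is that each characterization is exact. This means checking that the adjacency really forces the pair to be an ascent, respectively a boundary descent. It also means checking that the insertion counts produce each avoider exactly once: the prepended block must respect the boundary descent, and standardization must preserve the condition inside the suffix.
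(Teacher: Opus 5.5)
Your proposal is correct and follows the paper's proof essentially row by row. You use the tightening to classical $132$ for $\underline{13}2$, last-block completion plus Lemma~\ref{lem:oneadjacentfuss} for $\underline{31}2$, the right-to-left block insertions of Lemma~\ref{lem:insertioncounts}(a),(b) for $\underline{21}3$, $\underline{23}1$ and the $k=2$ case of $\underline{12}3$, and a direct translation into $P^{321}_{m,s,k}$ for $\underline{32}1$ (your remark that for $k=2$ the $\underline{12}3$ condition coincides with the $\underline{21}3$ condition is just a slightly more explicit justification of the paper's ``$s+2j-1$ choices'').
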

	
	\begin{proof}
		For $\underline{12}3$ with $k\ge3$ and $N\ge3$, the first three entries lie in the first block and already contain adjacent entries realizing $12$ followed by a larger entry.  When $k=2$, an occurrence can only start with the adjacent ascent inside a complete block.  Avoidance is therefore equivalent to requiring the second entry of each complete block to be larger than every entry to its right.  Inserting complete blocks from right to left gives, at the $j$th step, $s+2j-1$ choices.  This yields $2^m m!$ when $s=1$ and $\prod_{j=1}^m(2j+1)$ when $s=2$.
		
		By Lemma~\ref{lem:tightening}, avoiding $\underline{13}2$ is equivalent in $\D_{N,k}$ to avoiding the classical pattern $132$.  Theorem~\ref{thm:classical132231} therefore gives $R^{(k)}_{m,s}$.
		
		For $\underline{21}3$, the adjacent descent must be a boundary descent.  Thus an occurrence is a boundary descent $x_{i,k}>x_{i+1,1}$ followed by a later entry larger than $x_{i,k}$.  Avoidance is therefore equivalent to requiring every upper boundary entry $x_{i,k}$ to be larger than all entries to its right.  Insert the complete blocks from right to left.  If the current suffix has length $L=s+(j-1)k$, Lemma~\ref{lem:insertioncounts}(a) gives $\binom{s+jk-1}{k-1}$ choices.  Multiplying over $j=1,\ldots,m$ gives $Q^{(k)}_{m,s}$.
		
		For $\underline{23}1$, an occurrence is an adjacent ascent whose first entry is followed later by a smaller entry.  Hence, for every complete block, its first $k-1$ entries must be smaller than all entries to their right.  The suffix produced after deleting any initial complete block begins with its minimum.  Lemma~\ref{lem:insertioncounts}(b), with $L=s+(j-1)k$, gives the recurrence
		\[
		B_{j,s}=(s+(j-1)k)B_{j-1,s},\qquad B_{0,s}=1,
		\]
		so the count is $P^{(k)}_{m,s}$.
		
		For $\underline{31}2$, Lemma~\ref{lem:first312completion} completes the last block bijectively by appending largest entries.  The complete-block value is $F^{(k)}_m$ by Lemma~\ref{lem:oneadjacentfuss}.
		
		Finally, $\underline{32}1$ can occur only at a boundary descent followed by a later smaller entry.  Thus avoidance is equivalent to requiring every lower boundary entry $x_{i+1,1}$ to be smaller than all entries to its right.  This is exactly the displayed poset condition.
	\end{proof}
	
	\begin{theorem}\label{thm:lastadjacent}
		For vincular patterns in which the last two letters are adjacent, we have the following classification.
		\[
		\begin{array}{c|c}
			\toprule
			\text{pattern} & \text{number of avoiding permutations in }\D_{mk+s,k}\\
			\midrule
			1\underline{23} & 0\text{ for }k\ge3,\ N\ge3;\ \text{otherwise }\LE(P^{123}_{m,s,k})\\[1mm]
			1\underline{32} & U^{(k)}_{m,s}\\[1mm]
			2\underline{13} & F^{(k)}_m\\[1mm]
			2\underline{31} & R^{(k)}_{m,s}\\[1mm]
			3\underline{12} & k^m m!\\[1mm]
			3\underline{21} & \LE(P^{321\prime}_{m,s,k})\\
			\bottomrule
		\end{array}
		\]
		The poset $P^{123}_{m,s,k}$ is obtained from the block poset $B_{m,s,k}$ by adding, for every adjacent ascent pair $x_{i,a}<x_{i,a+1}$, the relations
		\[
		x_{i,a}<y
		\]
		for every earlier position $y$.  The poset $P^{321\prime}_{m,s,k}$ is obtained from $B_{m,s,k}$ by adding, for every boundary descent $x_{i,k}>x_{i+1,1}$, the relations
		\[
		y<x_{i,k}
		\]
		for every earlier position $y$.
	\end{theorem}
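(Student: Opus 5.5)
The plan is to treat the six rows one at a time. In each row we first identify which adjacent pairs of positions in $\D_{N,k}$ can realize the underlined pair of the pattern. Inside a block every adjacent step is an ascent, and every block boundary is a descent. So an underlined $\underline{12}$ or $\underline{23}$ must sit inside a block as $x_{i,a},x_{i,a+1}$, while an underlined $\underline{21}$, $\underline{31}$ or $\underline{32}$ must sit at a boundary as $x_{i,k},x_{i+1,1}$. Each avoidance condition then becomes either a record-type condition, which is a poset, or a reduction to an earlier result.

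\textbf{The reduction rows.} For $2\underline{31}$ we invoke Lemma~\ref{lem:tightening}: avoiding $2\underline{31}$ is the same as classical $231$-avoidance. Theorem~\ref{thm:classical132231} then gives $R^{(k)}_{m,s}$.

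For $2\underline{13}$ we apply Lemma~\ref{lem:last213completion} $k-s$ times. This gives a bijection with $\D_{(m+1)k,k}(2\underline{13})$. By Lemma~\ref{lem:oneadjacentfuss} this class has $C^{(k)}_{m+1}=F^{(k)}_m$ elements.

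For $1\underline{32}$, the adjacent $32$ is a boundary pair $x_{i,k}>x_{i+1,1}$. An occurrence needs an earlier $y<x_{i+1,1}$, since then automatically $y<x_{i,k}$. So avoidance says exactly that each $x_{i+1,1}$ is smaller than all entries to its left. Lemma~\ref{lem:recordlowcount} then gives $U^{(k)}_{m,s}$.

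\textbf{The poset rows.} For $1\underline{23}$ with $k\ge3$ and $N\ge3$, the first three entries of the first block form an occurrence. In general an occurrence is an in-block ascent $x_{i,a}<x_{i,a+1}$ with an earlier $y<x_{i,a}$. Avoidance is therefore that $x_{i,a}$ lies below every earlier position, which is exactly $P^{123}_{m,s,k}$.

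For $3\underline{21}$ an occurrence is a boundary descent with an earlier $y>x_{i,k}$, because then automatically $y>x_{i+1,1}$. Avoidance says each $x_{i,k}$ is a left-to-right maximum, which is $P^{321\prime}_{m,s,k}$. In both rows one must note that the conditions are necessary and sufficient, so that the linear extensions are exactly the avoiders, as in Lemma~\ref{lem:blockposet}.

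\textbf{The row $3\underline{12}$.} I expect this row to need the most care. An occurrence is an in-block pair $x_{i,a}<x_{i,a+1}$ with an earlier $y>x_{i,a+1}$. Avoidance says every entry $x_{i,b}$ with $b\ge2$ is larger than everything to its left.

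I will show this is the same poset used for $\underline{312}$ in Theorem~\ref{thm:fullyconsecutive}. That poset consists of the chain
\[
x_{1,1}<\cdots<x_{1,k}<x_{2,2}<\cdots<x_{2,k}<\cdots<x_{m+1,2}<\cdots<x_{m+1,s},
\]
together with the extra elements $x_{i+1,1}$, each required only to lie below $x_{i,k}$.

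For one direction, suppose every such $x_{i,b}$ with $b\ge2$ is a left-to-right maximum. Then these entries, together with the first block, form an increasing chain in position order, and the only remaining constraints are the boundary descents.

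For the other direction, take a linear extension of that poset and any $x_{i,b}$ with $b\ge2$. Every earlier position is either an earlier chain element, or a first-block-entry $x_{j+1,1}$ with $j+1\le i$. In the second case $x_{j+1,1}<x_{j,k}\le x_{i,b}$ in the chain, so $x_{i,b}$ is again larger than everything to its left.

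Hence the chain-insertion count from the $\underline{312}$ proof applies verbatim and gives $k^m m!$. The main obstacle is this identification of the two posets, specifically checking that the first entries $x_{i+1,1}$ impose no constraint beyond lying below $x_{i,k}$.
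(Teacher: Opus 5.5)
Your proposal is correct and follows the paper's proof essentially row by row. It locates the underlined pair inside a block or at a boundary, and uses the same reductions: Lemma~\ref{lem:tightening}, Lemma~\ref{lem:last213completion} together with Lemma~\ref{lem:oneadjacentfuss}, and Lemma~\ref{lem:recordlowcount}. It gives the same record-type poset conditions for $1\underline{23}$ and $3\underline{21}$, and the same identification of the $3\underline{12}$ poset with the $\underline{312}$ chain-insertion poset; your two-direction check of that last identification is simply more explicit than the paper's one-line assertion.
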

	
	\begin{proof}
		The pattern $1\underline{23}$ is forced for $k\ge3$ and $N\ge3$, since the first three entries lie in the first block and form such a pattern.  In the remaining cases, $1\underline{23}$ occurs precisely when an adjacent ascent has a smaller earlier entry, so avoidance is exactly the displayed set of poset relations.
		
		For $1\underline{32}$, the adjacent descent must be a boundary descent.  An occurrence consists of an earlier smaller entry followed by such a descent, so each lower boundary entry $x_{i+1,1}$ must be smaller than all entries to its left.  These are exactly the successive record-low conditions counted by Lemma~\ref{lem:recordlowcount}, giving $U^{(k)}_{m,s}$.
		
		For $2\underline{13}$, Lemma~\ref{lem:last213completion} completes the final block bijectively by inserting immediate successors of the final entry.  The complete-block value is $F^{(k)}_m$ by Lemma~\ref{lem:oneadjacentfuss}.
		
		By Lemma~\ref{lem:tightening}, avoiding $2\underline{31}$ is equivalent in $\D_{N,k}$ to avoiding the classical pattern $231$.  Hence Theorem~\ref{thm:classical132231} gives $R^{(k)}_{m,s}$.
		
		For $3\underline{12}$, an occurrence is an adjacent ascent whose larger entry is preceded by a still larger entry.  Thus avoidance is equivalent to requiring every entry that is not the first entry of a block after the first to be a left-to-right maximum at its position.  Equivalently, after deleting the first entry of each block except the first, the remaining word is increasing.  The resulting poset is the same chain-insertion poset that occurred for $\underline{312}$ in Theorem~\ref{thm:fullyconsecutive}: the deleted entries $x_{2,1},\ldots,x_{m+1,1}$ are inserted below $x_{1,k},x_{2,k},\ldots,x_{m,k}$, respectively.  Hence the count is again $\prod_{i=1}^m ik=k^m m!$.
		
		Finally, $3\underline{21}$ occurs exactly when a boundary descent is preceded by a larger entry.  Avoidance is therefore equivalent to requiring each upper boundary entry $x_{i,k}$ to be larger than all entries to its left, which is the displayed poset condition.
	\end{proof}
	
	\begin{corollary}\label{cor:vincularspecializations}
		In the complete-block case $N=(m+1)k$, the four one-adjacent classes
		\[
		\underline{13}2,\qquad 2\underline{31},
		\qquad \underline{31}2,
		\qquad 2\underline{13}
		\]
		are all counted by $C^{(k)}_{m+1}$.  Moreover,
		\[
		\sum_{m\ge0} R^{(k)}_{m,s}z^m=T(z)^s,
		\]
		where $T(z)=1+zT(z)^k$, and the product classes have the explicit rising-step forms
		\[
		P^{(k)}_{m,s}=s(s+k)\cdots(s+(m-1)k),
		\]
		\[
		Q^{(k)}_{m,s}=\prod_{j=1}^m\binom{s+jk-1}{k-1},
		\qquad
		U^{(k)}_{m,s}=\frac{(mk+s-1)!}{(s-1)!(k!)^m m!}.
		\]
	\end{corollary}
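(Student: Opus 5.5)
The corollary is an assembly of earlier results, so I would prove it by collecting the relevant statements and checking the specializations.

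For the first assertion, specialize to $s=k$, so $N=(m+1)k$ and $n=m+1$. Lemma~\ref{lem:oneadjacentfuss} then states directly that the four classes $\underline{13}2$, $2\underline{31}$, $\underline{31}2$, $2\underline{13}$ are counted by $C^{(k)}_{m+1}$. As a consistency check I would compare with the tables in Theorems~\ref{thm:firstadjacent} and~\ref{thm:lastadjacent}. The entries $\underline{31}2$ and $2\underline{13}$ give $F^{(k)}_m=C^{(k)}_{m+1}$ for every $s$. The entries $\underline{13}2$ and $2\underline{31}$ give $R^{(k)}_{m,k}$, and I would verify
\[
\frac{k}{(m+1)k}\binom{(m+1)k}{m}=\frac{1}{(k-1)(m+1)+1}\binom{(m+1)k}{m+1}.
\]
This follows from $\binom{(m+1)k}{m+1}=\frac{(m+1)k-m}{m+1}\binom{(m+1)k}{m}$ together with $(m+1)k-m=(k-1)(m+1)+1$.

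The generating function identity $\sum_m R^{(k)}_{m,s}z^m=T(z)^s$ is exactly the Raney evaluation \eqref{eq:Raney}, the Lagrange inversion statement recorded in Section~2. It was also restated in Corollary~\ref{cor:classicalgf}, so no new work is needed there.

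The product forms come from the definitions. $P^{(k)}_{m,s}=\prod_{j=0}^{m-1}(s+jk)$ is written out term by term, and $Q^{(k)}_{m,s}$ is its definition verbatim. For $U^{(k)}_{m,s}$, the closed form was already obtained inside the proof of Lemma~\ref{lem:recordlowcount} through the hook product. Independently, the product $\prod_{j=1}^m\binom{s+jk-1}{k}$ telescopes: the $j$th factor equals $\frac{(s+jk-1)!}{k!\,(s+(j-1)k-1)!}$, so the product is $\frac{(s+mk-1)!}{(k!)^m(s-1)!}$, and dividing by $m!$ gives the stated expression.

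The only step needing any care is the binomial identity in the $s=k$ specialization. Even that is routine, so I expect no real obstacle.
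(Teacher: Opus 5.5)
Your proposal is correct and matches the paper's proof: the complete-block claim is Lemma~\ref{lem:oneadjacentfuss} with $n=m+1$, the generating function is \eqref{eq:Raney}, and the product forms come from the definitions together with the hook-product computation in Lemma~\ref{lem:recordlowcount}. Your additional checks, namely $R^{(k)}_{m,k}=C^{(k)}_{m+1}$ and the telescoping of $\prod_{j}\binom{s+jk-1}{k}$, are both correct and simply make explicit what the paper leaves implicit.
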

	
	\begin{proof}
		The complete-block assertion is Lemma~\ref{lem:oneadjacentfuss} with $n=m+1$.  The generating function for $R^{(k)}_{m,s}$ is \eqref{eq:Raney}.  The formula for $P^{(k)}_{m,s}$ is its definition, and the displayed factorial form for $U^{(k)}_{m,s}$ is the last expression obtained in Lemma~\ref{lem:recordlowcount}.
	\end{proof}
	
	\begin{remark}
		The linear-extension formulas in Theorems~\ref{thm:fullyconsecutive}, \ref{thm:firstadjacent}, and \ref{thm:lastadjacent} are exact finite formulas, not recurrences.  They are comparatively explicit because the defining posets have only the original block relations plus one family of additional inequalities.  In particular, the exceptional cases are still reduced to standard poset enumeration rather than left as unspecified avoidance classes.
	\end{remark}
	
	\bibliographystyle{cas-model2-names}
	\bibliography{cas-refs}
	
	\printcredits
	
\end{document}